\newcommand{\R}{\varmathbb{R}}
\newcommand{\Z}{\varmathbb{Z}}
\newcommand{\N}{\varmathbb{N}}
\newcommand{\Rn}{{\varmathbb{R}^n}}
\newcommand{\Ha}{\mathcal{H}}
\newcommand{\tHa}{\tilde{\mathcal{H}}}
\newcommand{\M}{\mathcal{M}}
\newcommand{\Po}{\mathcal{P}}
\newcommand{\ve}{\varepsilon}
\def\diam{\qopname\relax o{diam}}
\def\loc{\qopname\relax o{loc}}
\def\spt{\qopname\relax o{spt}}
\def\diam{\qopname\relax o{diam}}
\def\inte{\qopname\relax o{int}}
\def\phi{\varphi}
\def\L{{\textsc{n}L}}
\let\oldmarginpar\marginpar
\renewcommand\marginpar[1]{\-\oldmarginpar[\raggedleft\footnotesize #1]%
{\raggedright\footnotesize #1}}
\theoremstyle{plain}
\newtheorem{theorem}[equation]{Theorem}
\newtheorem{lemma}[equation]{Lemma}
\newtheorem{proposition}[equation]{Proposition}
\newtheorem{corollary}[equation]{Corollary}
\theoremstyle{definition}
\newtheorem{definition}[equation]{Definition}
\newtheorem{example}[equation]{Example}
\theoremstyle{remark}
\newtheorem{remark}[equation]{Remark}
\numberwithin{equation}{section}
\title{On Lebesgue points and measurability with Choquet integrals}
\author{Petteri Harjulehto}
\address[Petteri Harjulehto]{Department of Mathematics and Statistics,
FI-00014 University of Helsinki, Finland}
\email{petteri.harjulehto@helsinki.fi}
\author{Ritva Hurri-Syrj\"anen}
\address[Ritva Hurri-Syrj\"anen]{Department of Mathematics and Statistics,
FI-00014 University of Helsinki, Finland}
\email{ritva.hurri-syrjanen@helsinki.fi}
\date{\today}
\begin{document}

\keywords{Choquet integral, dyadic Hausdorff content, dyadic Hausdorff capacity, Lebesgue point, maximal operator, non-measurable function}
\subjclass[2020]{28A25, 28A20, 42B25}

\begin{abstract} 
We consider Choquet integrals with respect to dyadic Hausdorff content of non-negative functions which are not necessarily Lebesgue measurable.
We  study the theory of Lebesgue points.
The studies yield convergence results and  also a density result 
 between function spaces.
We provide examples which show sharpness of the main convergence  theorem.
These  examples give additional information  about the convergence
 in the norm  also,  namely the difference 
of the functions in this setting and continuous functions.
\end{abstract}

\maketitle

\section{Introduction}

The Choquet integral  was introduced by G.\ Choquet \cite{Cho53} and  applied  by D.\ R.\ Adams  in the study of
nonlinear potential theory \cite{Adams1986, Adams1998, Adams2015}. Later on,
J. Xiao has studied Choquet integrals extensively with Adams 
\cite{AdaX2020}
and with his other coauthors \cite{DX, SXY}.
Recently, there has arisen a new interest in Choquet integrals and their properties. We refer to
\cite{Tang, PonceSpector2020, MartinezSpector2021, OoiPhuc2022, ChenOoiSpector2024, ChenSpector2023, 
HH-S_JFA,HH-S_AAG, PonceSpector2023, HH-S_AWM, HH-S_La, HCYZ, HH-S_pp, BCRS_pp}.

Choquet integral theory  has been 
concentrated to the context when functions are  continuous  or quasicontinuous  or at least Lebesgue measurable. However, this 
 seems not to be necessary.
 As in the case  of 
the Riemann and Lebesgue integral theories  which were developed for Lebesgue measurable functions  first, and  later on,
these integral theories  emerged  when functions are not Lebesgue measurable  \cite{Hil17, Jef32, Zak66},  also now
it seems that it is worth to study properties of Choquet integrals for functions which are not necessarily Lebesgue measurable.
Recall that D.\ Denneberg does the ground work 
in his monograph  \cite{Den94}
where  he has stated and proved
properties of Choquet integrals with respect to capacities with minimal assumptions on  these capacities  as well as functions.

We are interested in Choquet integrals with respect to dyadic Hausdorff content, that is dyadic Hausdorff capacity.
We study Lebesgue point properties using Choquet integrals with respect to dyadic Hausdorff content. We pay special attention to the assumption of Lebesgue measurability.
As a by-product of results we obtain a characterization for Lebesgue measurable functions:

\begin{corollary}\label{cor:intro}
Let $n\geq 1$.
Suppose that $f:\Rn \to [0, \infty]$ is a function  such that $\int_{B} f(y) \, d\tHa^n_\infty (y) < \infty$ for all open balls $B$ in  $\Rn$. Then,  the function $f$ is Lebesgue measurable if and only if
\[
f(x) = \limsup_{r \to 0^+} \frac{1}{\tHa^n_\infty(B(x, r))}\int_{B(x,r)} f(y) \, d\tHa^n_\infty (y)
\]
for $\tHa^n_\infty$-almost every $x \in \Rn$.
\end{corollary}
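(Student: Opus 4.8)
The plan is to derive this characterization from the paper's main Lebesgue-point convergence theorem together with a measurability argument, after first clarifying the role of the content in the notion of ``almost every.'' The crucial preliminary observation is that the full-dimensional dyadic Hausdorff content $\tHa^n_\infty$ assigns measure zero to exactly the same sets as Lebesgue measure on $\Rn$: a set is $\tHa^n_\infty$-null if and only if it is Lebesgue-null, since $\tHa^n_\infty$ is comparable to the Lebesgue outer measure in the top dimension. Hence the phrase ``for $\tHa^n_\infty$-almost every $x$'' may be read interchangeably as ``for Lebesgue-almost every $x$,'' and I will use this freely in both directions. Throughout, write $A_rf(x) := \tHa^n_\infty(B(x,r))^{-1}\int_{B(x,r)} f\,d\tHa^n_\infty$ for the Choquet average; note that this quantity is well defined for an \emph{arbitrary} nonnegative $f$, because the Choquet integral is given by the layer-cake formula $\int_{B}f\,d\tHa^n_\infty=\int_0^\infty \tHa^n_\infty(\{y\in B:f(y)>t\})\,dt$ and $\tHa^n_\infty$ is a monotone set function defined on \emph{all} subsets of $\Rn$, requiring no measurability of $f$. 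The local finiteness hypothesis guarantees $A_rf(x)<\infty$ for every $x$ and every $r>0$.

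For the necessity direction, suppose $f$ is Lebesgue measurable. Then the local integrability hypothesis places $f$ in the scope of the main convergence theorem established earlier in the paper, which asserts that for such $f$ the Choquet averages $A_rf(x)$ converge to $f(x)$ as $r\to 0^+$ at $\tHa^n_\infty$-almost every $x$ (the Lebesgue-point property). In particular the $\limsup$ of the averages equals the genuine limit, which equals $f(x)$, for almost every $x$. This is exactly the displayed identity, so the ``only if'' implication reduces to a direct application of the cited theorem.

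For the sufficiency direction, the task is to show that the right-hand side, as a function of $x$, is Lebesgue measurable; once this is known, the hypothesis that $f$ agrees with it $\tHa^n_\infty$-a.e.\ (equivalently, Lebesgue-a.e.) forces $f$ to coincide Lebesgue-a.e.\ with a measurable function, and completeness of Lebesgue measure then yields measurability of $f$ itself. The heart of the matter, and the step I expect to be the main obstacle, is establishing that $x\mapsto A_rf(x)$ is (for each fixed $r$) a Lebesgue measurable---indeed continuous---function of $x$ even when $f$ is not measurable, and that the passage $\limsup_{r\to 0^+}$ can be reduced to a countable operation. I would argue that the layer-cake representation together with the local finiteness assumption makes $x\mapsto A_rf(x)$ continuous in $x$ for fixed $r$, and that the map is sufficiently regular in $r$ (continuity, or monotonicity after suitable renormalization) to write
\[
\limsup_{r\to 0^+} A_rf(x) \;=\; \lim_{k\to\infty}\ \sup_{\substack{r\in\Q\\ 0<r<1/k}} A_rf(x),
\]
so that the right-hand side is a countable limit of countable suprema of measurable functions, hence Lebesgue measurable. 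Granting the regularity in $x$ and $r$ of the Choquet averaging operator---which is precisely what the earlier sections of the paper develop---the conclusion follows, completing the equivalence.
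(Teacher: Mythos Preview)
Your proposal follows the same two-pronged strategy as the paper: the forward implication via the Lebesgue-point theorem in the case $\delta=n$ (Corollary~\ref{cor:main}), and the reverse implication via measurability of $x\mapsto\limsup_{r\to 0^+}A_rf(x)$, which is exactly the content of Propositions~\ref{prop:limsup_f_is_measurable} and~\ref{prop:limit_is_measurable}. Two small points are worth noting. First, the paper establishes only \emph{lower} semicontinuity of $x\mapsto A_rf(x)$ (Proposition~\ref{prop:limsup_f_is_measurable}(1)), not full continuity as you assert; this is enough for Borel measurability, and together with left-continuity of $r\mapsto\int_{B(x,r)}f\,d\tHa^n_\infty$ (from Proposition~\ref{lema:convergence}) it justifies your reduction of the $\limsup$ to rational radii. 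Second, for the forward direction Corollary~\ref{cor:main} requires global membership in $\L^1(\Rn,\tHa^n_\infty)$, whereas the hypothesis here is only local; the paper handles this explicitly by truncating with $f_i:=\chi_{B(0,i)}f$, applying Corollary~\ref{cor:main} to each $f_i$, and taking the union of the (countably many) exceptional sets---a localization step you pass over.
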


In Section \ref{sec:Hausdorff} and Section \ref{sec:Choquet} we recall definitions and basic properties for dyadic Hausdorff content and
Choquet integral with respect to dyadic Hausdorf content, respectively. 
 We start Section \ref{sec:WeakType} by proving convergence results
for increasing sequences of non-negative functions when integrals are taken in Choquet sense with respect to dyadic Hausdorff content.
Later on, 
 we recall the definitions needed for study Hausdorff content maximal functions
  and the function space $\L^1(\Omega, \tHa^{\delta}_\infty)$, $0<\delta \le n$.
In Section \ref{sec:Lebesgue points} we  concentrate on properties related to Lebesgue points in this context 
 and
introduce the concept of $\L^1$-limit of continuous functions  in
Definition \ref{nlimitdef}
for non-negative functions in the space 
$\L^1(\Omega, \tHa^{\delta}_\infty)$, $0<\delta \le n$.
This will lead us study functions defined in this space and denseness of continuous functions  there.
In Section~\ref{sec:sharpness} we provide two examples of Lebesgue measurable functions from 
$\L^1(\Omega, \tHa^{\delta}_\infty)$, $0<\delta <n$, which are not 
$\L^1$-limits of continuos functions.
In particular, these examples show that 
continuous functions are not dense in  the space $\L^1(\Rn, \tHa^\delta_\infty)$
if $0<\delta< n$,
Remark \ref{rmk:dense}.
The examples also show necessity and sharpness of our assumption
 in one of our main theorems,  Theorem~\ref{thm:measurable}.

\section{Hausdorff content}\label{sec:Hausdorff}

We recall the definition of the  $\delta$-dimensional 
dyadic Hausdorff content for any given set in $\Rn$,
\cite{YangYuan08}.
Cubes in $\Rn$ are denoted by $Q$ and 
the side length of a  given cube $Q$ is written as 
$\ell (Q)$. 

We let $E$ be a set in $\Rn$, $n \ge 1$,  and suppose that
$\delta \in (0, n]$.
The  $\delta$-dimensional 
dyadic
Hausdorff content for any given set $E$ 
in $\Rn$ is defined by
\begin{equation*}
\tilde{\Ha}_\infty^{\delta} (E) := \inf \bigg\{ \sum_{i=1}^\infty \ell (Q_i)^{\delta}: E \subset \inte\big(\bigcup_{i=1}^\infty Q_i \big)\bigg\}\label{HausdorffCD}\,,
\end{equation*}
where the infimum is taken over all 
countably (or finite) collections of dyadic cubes such that the interior of the union of the cubes covers $E$. 
If $k\in\Z$ is fixed, we
write
 $\mathcal{Q}_k$ for a collection of cubes whose vertices are from the lattice $(2^{k} \Z)^n$. This means that
  the side length  of cubes is $2^k$  and each cube is  congruent to  a cube $[0, 2^k)^n$.
  It is customary to call these cubes half open.  
By dyadic cubes we mean an union of collection of cubes  $\bigcup_{k \in \Z} \mathcal{Q}_k$.

The cube covering is called
a dyadic cube covering or dyadic cube cover.

 The  $\delta$-dimensional dyadic Hausdorff content 
$\tilde{\Ha}_\infty^{\delta}$ satisfies  the following  useful properties, which means that it is a Choquet capacity.
\begin{enumerate}
\item[(H1)]  $\tilde{\Ha}_\infty^{\delta} (\emptyset) =0$;
\item[(H2)](\emph{monotonicity}) if $A \subset B$, then $\tilde{\Ha}_\infty^{\delta}  (A) \le     \tilde{\Ha}_\infty^{\delta}        (B)$;
\item[(H3)](\emph{subadditivity}) if $(A_i)$ is any sequence of sets, then
\[
\tilde{\Ha}_\infty^{\delta}
\Big(\bigcup_{i=1}^\infty A_i \Big)
\le  \sum_{i=1}^\infty         \tilde{\Ha}_\infty^{\delta}        (A_i);
\]
\item[(H4)] if $(K_i)$ is a decreasing sequence of compact sets, then 
\[
  \tilde{\Ha}_\infty^{\delta}              \Big(\bigcap_{i=1}^\infty K_i \Big)
= \lim_{i \to \infty}       \tilde{\Ha}_\infty^{\delta}           (K_i);
\]
\item[(H5)] if $(E_i)$ is an increasing sequence of  sets, then 
\[
  \tilde{\Ha}_\infty^{\delta}     \big(\bigcup_{i=1}^\infty E_i \big)
= \lim_{i \to \infty}       \tilde{\Ha}_\infty^{\delta}        (E_i).
\]
\end{enumerate}
Moreover,  $\tilde{\Ha}_\infty^{\delta}$ is \emph{strongly subadditive}, that is
\begin{equation}\label{st1}
     \tilde{\Ha}_\infty^{\delta}              (A_1 \cup A_2) + 
     \tilde{\Ha}_\infty^{\delta}
     (A_1 \cap A_2) \le 
    \tilde{\Ha}_\infty^{\delta} (A_1) +
 \tilde{\Ha}_\infty^{\delta}(A_2)
\end{equation}
for all  subsets $A_1, A_2$ in $\Rn$. 

For  the validity of these properties we refer to 
\cite[Theorem 2.1, Propositions 2.3 and 2.4]{YangYuan08}.
We point out  that
property  \eqref{st1}  is called submodularity  by D. Denneberg   \cite[p. 16]{Den94}.


\section{Choquet integral}
\label{sec:Choquet}

 We recall the definition of Choquet integral, now with respect to  the dyadic Hausdorff content.
 Let $\Omega$ be a subset of $\Rn$, $n \ge 1$.
For any function $f:\Omega\to [0,\infty]$ the integral in the sense of Choquet with respect to  the  $\delta$-dimensional  dyadic Hausdorff content is defined by
\begin{equation}\label{IntegralDef}
\int_\Omega f(x) \, d  \tHa^\delta_\infty         := \int_0^\infty \tHa^\delta_\infty\big(\{x \in \Omega : f(x)>t\}\big) \, dt. 
\end{equation}
Since  $\tilde{\Ha}_\infty^{\delta}$ is  a monotone  set function,
the corresponding distribution function
$t \mapsto \tHa^\delta_\infty\big(\{x \in \Omega : f(x)>t\}\big)$ 
for any function $f:\Omega\to [0,\infty]$
is decreasing with respect to $t$.
By decreasing property  the  distribution function 
$t \mapsto \tHa^\delta_\infty\big(\{x \in \Omega : f(x)>t\}\big)$ is measurable
 with respect to  the 1-dimensional  Lebesgue measure. Thus, $\int_0^\infty 
 \tHa^\delta_\infty\big(\{x \in \Omega : f(x)>t\}\big) \, dt$ 
 is well defined as a Lebesgue integral. 
The right-hand side of \eqref{IntegralDef} can be understood  also as an improper Riemann integral, since a decreasing function is Riemann integrable.
 The wording {\it{any function}} 
  in the present paper 
 means that the only requirement is that the function is well defined. We emphasise that the Choquet  
 integral is well defined for any non-measurable set in $\Rn$ and
for  any non-measurable function.
A classical example of a non-measurable function is a 
characteristic function of a non-measurable set,
originally constructed  by G. Vitali \cite{Vitali}.

We use the following properties of 
the Choquet integral with respect to  the dyadic Hausdorff content frequently. For these basic properties and others we refer to
\cite{Adams1998}, \cite[Chapter 4]{Adams2015}, \cite{CerMS11}, and \cite{HH-S_pp}. 

\begin{lemma}\label{lem:Integral-basic-properties}
Suppose that $\Omega$ is a subset of $\Rn$, functions 
$f, g : \Omega \to [0, \infty]$ are non-negative, 
and 
$\delta \in (0,n]$. Then:
\begin{enumerate}
\item[(I1)] $ \displaystyle \int_\Omega a f(x) \, d \tHa_\infty^{\delta} = a \int_\Omega  f(x) \, d \tHa_\infty^{\delta}$ with any $a\ge 0$;
\item[(I2)] $\displaystyle \int_\Omega f(x) \, d \tHa_\infty^{\delta}=0$ if and only if $f(x)=0$  for $\tHa_\infty^{\delta}$-almost every $x\in \Omega$;
\item[(I3)] if $E\subset \Rn$, then $\displaystyle \int_\Omega \chi_E(x) \, d \tHa_\infty^{\delta} = \tHa_\infty^{\delta}(\Omega \cap E)$;
\item[(I4)] if $A\subset B \subset \Omega$, then $\int_A f(x) \, d \tHa_\infty^{\delta} \le \int_B f(x) \, d \tHa_\infty^{\delta}$;
\item[(I5)] if $0\le f\le g$, then $\displaystyle \int_\Omega f(x) \, d \tHa_\infty^{\delta}\le \int_\Omega g(x) \, d \tHa_\infty^{\delta}$.
\end{enumerate}
\end{lemma}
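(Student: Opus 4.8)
The plan is to verify each property directly from the definition \eqref{IntegralDef}, reducing everything to the behaviour of the distribution function $t \mapsto \tHa^\delta_\infty(\{x \in \Omega : f(x) > t\})$ together with the set-function properties (H1)--(H5). Most of the items reduce to an inclusion between superlevel sets followed by integration in $t$, so the work is light; the one place that needs a limiting argument is (I2).

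For (I1) I would split into the cases $a = 0$ and $a > 0$. When $a = 0$ the superlevel sets $\{x \in \Omega : af(x) > t\}$ are empty for every $t > 0$, so both sides vanish by (H1). When $a > 0$ the identity $\{x : af(x) > t\} = \{x : f(x) > t/a\}$ combined with the substitution $s = t/a$ in the one-dimensional Lebesgue integral yields the claimed homogeneity. Properties (I4) and (I5) are then immediate from monotonicity (H2): the inclusions $\{x \in A : f > t\} \subseteq \{x \in B : f > t\}$ and $\{f > t\} \subseteq \{g > t\}$ hold for every $t$, so the distribution functions are pointwise ordered and integrating in $t$ preserves the inequality. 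For (I3) I would observe that for $\chi_E$ on $\Omega$ the superlevel set $\{x \in \Omega : \chi_E(x) > t\}$ equals $\Omega \cap E$ when $0 \le t < 1$ and is empty when $t \ge 1$, so integrating the constant value $\tHa^\delta_\infty(\Omega \cap E)$ over $(0,1)$ gives the result.

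The only step that requires more than a direct computation, and hence the main obstacle, is the ``only if'' direction of (I2), where I would invoke continuity from below (H5). If the integral vanishes, then since the distribution function is non-negative and decreasing it must equal zero for every $t > 0$; in particular $\tHa^\delta_\infty(\{f > 1/k\}) = 0$ for each $k \in \N$. Writing $\{f > 0\} = \bigcup_{k=1}^\infty \{f > 1/k\}$ as an increasing union and applying (H5) gives $\tHa^\delta_\infty(\{f > 0\}) = \lim_{k\to\infty} \tHa^\delta_\infty(\{f > 1/k\}) = 0$, that is, $f = 0$ $\tHa^\delta_\infty$-almost everywhere. The converse direction is easy: if $\tHa^\delta_\infty(\{f > 0\}) = 0$, then for every $t > 0$ we have $\{f > t\} \subseteq \{f > 0\}$, so monotonicity forces $\tHa^\delta_\infty(\{f > t\}) = 0$ and the integral is zero. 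I expect the delicate point to be flagging clearly that ``$f = 0$ $\tHa^\delta_\infty$-a.e.'' is by definition $\tHa^\delta_\infty(\{f > 0\}) = 0$, so that no measurability of $f$ is used anywhere in the argument.
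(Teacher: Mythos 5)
Your proof is correct. The paper itself does not prove this lemma at all --- it only states the properties and refers to \cite{Adams1998}, \cite[Chapter 4]{Adams2015}, \cite{CerMS11}, and \cite{HH-S_pp} --- and your layer-cake arguments (reducing (I3)--(I5) to inclusions of superlevel sets plus (H2), handling (I1) by the substitution $s=t/a$, and settling the ``only if'' direction of (I2) by noting that a decreasing nonnegative distribution function with vanishing integral is identically zero on $(0,\infty)$ and then applying (H5) to the increasing union $\{f>0\}=\bigcup_k\{f>1/k\}$) are exactly the standard ones found in those references, and, as you correctly emphasize, they use no Lebesgue measurability of $f$ anywhere.
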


\begin{remark}\label{lemma_a}
Note that if  $\Omega \subset \Rn$ is a Lebesgue measurable set, then
 by  \cite[Remark 3.4]{HH-S_pp} 
 there 
exists a constant  $c(n)>0$ such that 
\begin{equation}\label{compare_int}
\frac{1}{c(n)}  \int_\Omega |f(x)|  \, d \tHa^{ n}_\infty \le \int_\Omega |f(x)| \, dx \le c(n)  \int_\Omega |f(x)|  \, d \tHa^{ n}_\infty
\end{equation}
for all Lebesgue measurable  functions $f:\Omega \to [-\infty, \infty]$.
\end{remark}

Recall that a Choquet integral with respect to a set function $H: \Po(\Rn) \to [0, \infty]$           
is sublinear, if the inequality
\begin{equation*}
\int_\Omega \sum_{i=1}^{\infty} f_i(x) d H\le  K \sum_{i=1}^{\infty} \int_\Omega  f_i(x) d H
\end{equation*}
holds with $K=1$ for 
all sequences $(f_i)$ of functions $f_i:\Omega \to [0, \infty]$. 
By \cite[Chapter 6]{Den94},
a necessary condition for  the Choquet integral to be sublinear is  that the corresponding set function $H$ is strongly subadditive,
that is 
\begin{equation*}
H(A_1 \cup A_2) + H(A_1 \cap A_2) \le H(A_1) + H(A_2)
\end{equation*}
for all  subsets $A_1$ and  $A_2$ in $\Rn$. 
Since the  dyadic Hausdorff content  $\tilde{\Ha}_\infty^{\delta}$ is  a monotone and 
strongly subadditive set function by (H2) and \eqref{st1} 
and we use only non-negative functions,
Denneberg's result
\cite[Theorem 6.3, p.~75]{Den94} shows 
that Choquet integral with respect to the  dyadic Hausdorff content  $\tilde{\Ha}_\infty^{\delta}$ is sublinear.

\begin{theorem}\label{thm:sublin} \cite[Theorem 6.3]{Den94}.
If $\Omega$ is a subset of $\Rn$ and  $\delta \in (0, n]$, then
for 
all sequences $(f_i)$ of non-negative functions $f_i:\Omega \to [0, \infty]$ 
\begin{equation*}
\int_\Omega \sum_{i=1}^{\infty} f_i(x) d \tilde{\Ha}_\infty^{\delta} \le  \sum_{i=1}^{\infty} \int_\Omega  f_i(x) d \tilde{\Ha}_\infty^{\delta}.
\end{equation*}
\end{theorem}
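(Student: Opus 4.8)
The plan is to deduce the countable inequality from its two–function special case, establish that case through a chain of approximations, and let a monotone convergence principle — derived directly from the continuity from below (H2)/(H5) of $\tHa^\delta_\infty$ — power the limit passages. First I record this principle: if $0\le h_m \uparrow h$ pointwise on $\Omega$, then for each fixed $t$ the superlevel sets increase, $\{h_m>t\}\uparrow\{h>t\}$, so (H5) gives $\tHa^\delta_\infty(\{h_m>t\})\uparrow\tHa^\delta_\infty(\{h>t\})$, and the classical monotone convergence theorem applied in the variable $t$ to the definition \eqref{IntegralDef} yields $\int_\Omega h_m\,d\tHa^\delta_\infty \to \int_\Omega h\,d\tHa^\delta_\infty$. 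Applying this to the partial sums $S_N=\sum_{i=1}^N f_i \uparrow \sum_{i=1}^\infty f_i$ reduces the theorem to the finite inequality, and a one–line induction further reduces it to the pairwise statement
\[
\int_\Omega (f+g)\, d\tHa^\delta_\infty \le \int_\Omega f\, d\tHa^\delta_\infty + \int_\Omega g\, d\tHa^\delta_\infty
\]
for all non-negative $f,g:\Omega\to[0,\infty]$.

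The decisive algebraic fact sits at the level of characteristic functions. For $A,B\subset\Omega$ the function $\chi_A+\chi_B$ has superlevel set $A\cup B$ for $0<t<1$ and $A\cap B$ for $1\le t<2$, so the layer–cake definition \eqref{IntegralDef} gives $\int_\Omega(\chi_A+\chi_B)\,d\tHa^\delta_\infty = \tHa^\delta_\infty(A\cup B)+\tHa^\delta_\infty(A\cap B)$. Comparing with $\int_\Omega\chi_A\,d\tHa^\delta_\infty+\int_\Omega\chi_B\,d\tHa^\delta_\infty = \tHa^\delta_\infty(A)+\tHa^\delta_\infty(B)$ and invoking the strong subadditivity \eqref{st1}, the pairwise inequality holds for indicators. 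This is the \emph{only} place submodularity enters, and it is exactly what forces the sharp constant $K=1$; mere subadditivity (H3) together with $\{f+g>u\}\subset\{f>u/2\}\cup\{g>u/2\}$ would only produce a factor $2$.

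Next I would promote this to non-negative simple functions $f,g$, written in layer–cake form along decreasing chains of their superlevel sets. The engine is the pointwise inclusion, valid for all $s,t\ge 0$,
\[
\{f>s\}\cap\{g>t\} \subset \{f+g>s+t\} \subset \{f>s\}\cup\{g>t\},
\]
which, combined with monotonicity (H2) and the submodularity \eqref{st1}, gives
\[
\tHa^\delta_\infty(\{f+g>s+t\}) \le \tHa^\delta_\infty(\{f>s\}) + \tHa^\delta_\infty(\{g>t\}) - \tHa^\delta_\infty(\{f>s\}\cap\{g>t\}).
\]
Summing these estimates along the two chains with a compatible bookkeeping of the thresholds $s+t$ makes the negative intersection terms telescope, leaving precisely $\int_\Omega f\,d\tHa^\delta_\infty+\int_\Omega g\,d\tHa^\delta_\infty$ on the right. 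Finally, for arbitrary non-negative $f,g$ I would take the standard increasing simple approximations $f_m\uparrow f$ and $g_m\uparrow g$, so that $f_m+g_m\uparrow f+g$, apply the simple–function inequality for each $m$, and let $m\to\infty$, using the monotone convergence principle above on all three integrals. This reproduces Denneberg's \cite[Theorem~6.3]{Den94} specialised to $\tHa^\delta_\infty$.

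The step I expect to be the genuine obstacle is the telescoping summation in the simple–function case: because $\tHa^\delta_\infty$ is only strongly subadditive and not additive, the superlevel sets of $f+g$ admit no direct decomposition, and one must align the inequalities along the two decreasing chains so that the intersection terms cancel exactly rather than merely up to a constant. By contrast, the two reductions and the two limit passages are routine once the monotone convergence consequence of (H5) is available.
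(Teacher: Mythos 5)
Your outer scaffolding is fine: the monotone convergence principle you derive from (H5) is exactly the paper's Proposition~\ref{lema:convergence}, it correctly reduces the countable inequality to the finite one, induction reduces that to two functions, and your indicator computation via \eqref{st1} is correct. You should know, however, that the paper does not prove this theorem at all: it checks that $\tHa^\delta_\infty$ is monotone (H2) and strongly subadditive \eqref{st1} and then cites Denneberg \cite[Theorem 6.3]{Den94}. So what you are attempting is a reconstruction of Denneberg's proof, and the step you flag as "the genuine obstacle" is not an obstacle you have deferred --- it is the entire content of the theorem, and the mechanism you propose for it fails.

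Concretely, write $\mu:=\tHa^\delta_\infty$ and test your telescoping scheme on two indicators $f=\chi_A$, $g=\chi_B$ with $\mu(A\cap B)<\min\bigl(\mu(A),\mu(B)\bigr)$ (two overlapping balls, say). Your only upper estimate is
\begin{equation*}
\mu\bigl(\{f+g>s+t\}\bigr)\le \mu\bigl(\{f>s\}\bigr)+\mu\bigl(\{g>t\}\bigr)-\mu\bigl(\{f>s\}\cap\{g>t\}\bigr),
\end{equation*}
applied to some choice of splits $s+t=u$. For $u\in(0,1)$ every admissible split has $s,t<1$, so the bound reads $\mu(A)+\mu(B)-\mu(A\cap B)$. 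For $u\in[1,2)$ the true superlevel set is $A\cap B$, but the splits only offer the bounds $\mu(A)+\mu(B)-\mu(A\cap B)$, $\mu(A)$, or $\mu(B)$; the best of these is $\min\bigl(\mu(A),\mu(B)\bigr)$, which strictly exceeds $\mu(A\cap B)$. Integrating your bounds in $u$ therefore gives at best $\mu(A)+\mu(B)-\mu(A\cap B)+\min\bigl(\mu(A),\mu(B)\bigr)>\mu(A)+\mu(B)$: no bookkeeping of the thresholds can make the intersection terms cancel, because an upper bound by unions is simply too lossy at high levels. Note that your own successful indicator computation used a different mechanism: the \emph{exact} layer-cake identity $\int(\chi_A+\chi_B)\,d\mu=\mu(A\cup B)+\mu(A\cap B)$, in which the intersection enters with a \emph{positive} sign, followed by one application of \eqref{st1}. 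What the simple-function case requires is the generalization of that exact identity --- a decomposition of the layers of $f+g$ in which intersections of the two chains of superlevel sets appear positively and cancel the negative submodularity corrections --- and producing it is the hard combinatorial core of Choquet's theorem. Denneberg proves it by a different route: a monotone submodular set function admits, on any finite chain, a dominated additive set function agreeing with it on that chain (a greedy/core argument), after which subadditivity is immediate because the Choquet integral of $f+g$ only depends on the capacity of the superlevel sets of $f+g$, and additive integrals split. Without either that lemma or a genuine inductive layer argument, your outline asserts the conclusion of the crucial step rather than proving it.
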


 We point out that Denneberg  stated and proved his theorem
\cite[Theorem 6.3]{Den94} for a more general setting.
  

\section{Auxiliary convergence results}\label{sec:WeakType}

We prove a monotone convergence result and Fatou's lemma for non-negative functions in this setting. For a similar convergence theorem we refer to
\cite[Theorem 8.1]{Den94} where the situation is more general. On the other hand, convergence results with different additional assumptions than ours have been proved in
\cite{Kawabe2019} and \cite{PonceSpector2023}, for example.

\begin{proposition}\label{lema:convergence}
Suppose that $\delta \in(0, n]$.
 If $(f_i)$ is a sequence of non-negative  increasing functions $f_i:\Omega\to [0,\infty]$ and 
$f(x) = \lim_{i \to \infty} f_i (x)$ for every $x \in \Omega$, then
\[
\lim_{i \to \infty} \int_\Omega f_i \, d \tHa_\infty^{\delta} =
\int_\Omega f \, d \tHa_\infty^{\delta}.
\]
\end{proposition}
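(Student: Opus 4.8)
The plan is to pass through the layer-cake (distribution function) representation of the Choquet integral supplied by \eqref{IntegralDef} and to reduce the claim to the classical monotone convergence theorem for Lebesgue integrals on the half-line $(0,\infty)$. Writing
\[
g_i(t) := \tHa_\infty^{\delta}\big(\{x \in \Omega : f_i(x) > t\}\big), \qquad g(t) := \tHa_\infty^{\delta}\big(\{x \in \Omega : f(x) > t\}\big),
\]
the definition \eqref{IntegralDef} reads $\int_\Omega f_i \, d\tHa_\infty^{\delta} = \int_0^\infty g_i(t)\,dt$ and $\int_\Omega f \, d\tHa_\infty^{\delta} = \int_0^\infty g(t)\,dt$. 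Thus it suffices to show that $\int_0^\infty g_i \, dt \to \int_0^\infty g \, dt$, and for this I would establish that $(g_i)$ is an increasing sequence of one-dimensional Lebesgue measurable functions converging pointwise to $g$ on $(0,\infty)$.

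The key step is a set-theoretic identity at each fixed level $t > 0$. Since $f_i \le f_{i+1}$ for every $i$, monotonicity (H2) gives the nesting $\{f_i > t\} \subset \{f_{i+1} > t\}$, whence $g_i(t) \le g_{i+1}(t)$, so $(g_i)$ is increasing in $i$. I then claim
\[
\bigcup_{i=1}^\infty \{x \in \Omega : f_i(x) > t\} = \{x \in \Omega : f(x) > t\}.
\]
The inclusion $\subset$ is immediate from $f_i \le f$. For $\supset$, if $f(x) > t$ then, because $f_i(x) \nearrow f(x)$ (allowing the value $+\infty$), there is an index $i$ with $f_i(x) > t$; this is precisely where the pointwise monotone convergence hypothesis enters. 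Applying the continuity property (H5) of the content along this increasing sequence of superlevel sets then yields $g(t) = \lim_{i \to \infty} g_i(t)$ for every $t > 0$.

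With pointwise convergence and monotonicity of $(g_i)$ in hand, I would conclude by the classical monotone convergence theorem on $(0,\infty)$. Each $g_i$ is decreasing in $t$ as the distribution function of a monotone set function, as already observed after \eqref{IntegralDef}, hence Lebesgue measurable; the limit $g$ is likewise decreasing and measurable. Therefore $\int_0^\infty g_i\,dt \to \int_0^\infty g\,dt$, which is the assertion.

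I expect the only genuine subtlety, the main obstacle, to be the careful verification of the superlevel set identity together with the precise invocation of (H5): one must check that the union of the strictly-superlevel sets is again the strictly-superlevel set of the limit, where both the strict inequality and the possibility $f(x) = \infty$ matter, and that (H5) applies because the sets are genuinely increasing. Everything else is a direct reduction to the one-dimensional monotone convergence theorem. Note that the easy inequality $\lim_{i} \int_\Omega f_i \, d\tHa_\infty^{\delta} \le \int_\Omega f \, d\tHa_\infty^{\delta}$ already follows from (I5), so only the reverse inequality genuinely requires the continuity (H5) of the content.
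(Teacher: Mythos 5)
Your proof is correct and follows essentially the same route as the paper's: both reduce the claim via the layer-cake definition \eqref{IntegralDef} to the classical monotone convergence theorem on $(0,\infty)$, using (H2) for the monotonicity of the distribution functions in $i$ and (H5) together with the superlevel-set identity $\bigcup_{i}\{f_i>t\}=\{f>t\}$ to identify the pointwise limit. Your explicit verification of that set identity and the closing remark about (I5) are just slightly more detailed bookkeeping of the same argument.
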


\begin{proof}
Since the sequence $(f_i)$ is increasing,  the monotonicity property (H2) of the set function $\tHa_\infty^{\delta}$  yields that
\begin{equation}\notag
\tHa_\infty^{\delta} \big(\{x \in \Omega : f_i(x) >t\} \big) \le \tHa_\infty^{\delta} \big(\{x \in \Omega : f_{i+1}(x) >t\} \big)
\mbox{ for all } t\geq 0\,.
\end{equation}
This means that the set function
$t \mapsto \tHa_\infty^{\delta} \big(\{x \in \Omega : f_i(x) >t\} \big) $ is increasing  with respect to the index  $i$. 
Hence, the
Lebesgue monotone convergence results in $\R$, 
 for example \cite[2.14]{Folland},
imply that
\[
\begin{split}
\lim_{i \to \infty} \int_\Omega f_i \, d \tHa_\infty^{\delta}
&= \lim_{i \to \infty}\int_0^\infty \tHa_\infty^{\delta} \big(\{x \in \Omega : f_i(x) >t\} \big) \, dt\\
&= \int_0^\infty \lim_{i \to \infty} \tHa_\infty^{\delta} \big(\{x \in \Omega : f_i(x) >t\} \big) \, dt\,.
\end{split}
\]
By property (H5) 
we obtain the claim. Namely,
\begin{align*}
&\int_0^\infty \lim_{i \to \infty} \tHa_\infty^{\delta} \big(\{x \in \Omega : f_i(x) >t\} \big) \, dt
= \int_0^\infty  \tHa_\infty^{\delta} \Big( \bigcup_{i=1}^\infty \{x \in \Omega : f_i(x) >t\} \Big) \, dt \\
&\quad=\int_0^\infty \tHa_\infty^{\delta} \big(\{x \in \Omega : f(x) >t\} \big) \, dt
= \int_\Omega f \, d \tHa_\infty^{\delta}. \qedhere 
\end{align*}
\end{proof}

The previous convergence result 
gives  Fatou's lemma.

\begin{proposition}\label{cor:Fatou}
Let $\delta \in(0,n]$. If
$(f_i)$ is a sequence of non-negative  functions defined on $\Omega$, then
\[
 \int_\Omega \liminf_{i \to \infty} f_i \, d \tHa_\infty^{\delta} \le \liminf_{i \to \infty} \int_\Omega f_i \, d \tHa_\infty^{\delta}\,.
\]
\end{proposition}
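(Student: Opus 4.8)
The plan is to mimic the classical derivation of Fatou's lemma from the monotone convergence theorem, here playing the role of Proposition~\ref{lema:convergence}. First I would introduce the auxiliary functions $g_k : \Omega \to [0,\infty]$ defined by
\[
g_k(x) := \inf_{i \ge k} f_i(x), \qquad x \in \Omega .
\]
Since each $f_i$ is non-negative, every $g_k$ is a well-defined non-negative function. I want to stress that no measurability is invoked at this stage, which is exactly the point in the present setting where the $f_i$ may fail to be Lebesgue measurable: the pointwise infimum of arbitrary non-negative functions is again a non-negative function. As $k$ increases the infimum is taken over a smaller index set, so $(g_k)$ is an increasing sequence, and by the definition of the lower limit one has $\lim_{k\to\infty} g_k(x) = \liminf_{i\to\infty} f_i(x)$ for every $x \in \Omega$.

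Next I would apply Proposition~\ref{lema:convergence} to the increasing sequence $(g_k)$, which yields
\[
\int_\Omega \liminf_{i \to \infty} f_i \, d \tHa_\infty^{\delta} = \lim_{k\to\infty} \int_\Omega g_k \, d \tHa_\infty^{\delta} .
\]
For every fixed $k$ and every $i \ge k$ we have $g_k \le f_i$ pointwise, so the monotonicity property (I5) of the Choquet integral gives $\int_\Omega g_k \, d \tHa_\infty^{\delta} \le \int_\Omega f_i \, d \tHa_\infty^{\delta}$. Taking the infimum over $i \ge k$ therefore produces
\[
\int_\Omega g_k \, d \tHa_\infty^{\delta} \le \inf_{i \ge k} \int_\Omega f_i \, d \tHa_\infty^{\delta} .
\]

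Finally I would let $k \to \infty$. The left-hand side tends to $\int_\Omega \liminf_{i} f_i \, d \tHa_\infty^{\delta}$ by the previous display, while the right-hand side tends to $\liminf_{i\to\infty} \int_\Omega f_i \, d \tHa_\infty^{\delta}$ by the very definition of the lower limit of the numerical sequence $\big(\int_\Omega f_i \, d \tHa_\infty^{\delta}\big)$; combining the two gives the asserted inequality. I do not anticipate any serious obstacle here, and the argument is essentially the textbook one. The single point that deserves attention, and the only reason this deserves to be recorded separately, is the verification that neither the construction of $g_k$ nor the application of Proposition~\ref{lema:convergence} secretly relies on measurability, since the whole purpose of this circle of results is to accommodate functions that need not be Lebesgue measurable. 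Because Proposition~\ref{lema:convergence} was established without any measurability hypothesis and infima preserve non-negativity, the reasoning goes through verbatim.
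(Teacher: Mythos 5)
Your proof is correct and is essentially identical to the paper's: the paper also sets $g_k := \inf_{j \ge k} f_j$, applies Proposition~\ref{lema:convergence} to this increasing sequence, and concludes via the pointwise bound $g_k \le f_k$ together with the monotonicity of the Choquet integral. Your version merely spells out the final step (taking the infimum over $i \ge k$ before passing to the limit) slightly more explicitly than the paper does.
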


\begin{proof}
Let us write that $g_k(x) := \inf_{j \ge k} f_j(x)$. Then $(g_k)$ is an increasing
sequence and $\lim_{k \to \infty}g_k(x) = \liminf_{i \to \infty} f_i(x)$. 
By Proposition~\ref{lema:convergence} we obtain
\[
\begin{split}
 \int_\Omega \liminf_{i \to \infty} f_i \, d \tHa_\infty^{\delta}
 &=  \int_\Omega \lim_{k \to \infty} g_k \, d \tHa_\infty^{\delta}
 = \lim_{k \to \infty} \int_\Omega  g_k \, d \tHa_\infty^{\delta}
 \le  \liminf_{k \to \infty} \int_\Omega  f_k \, d \tHa_\infty^{\delta}.
\end{split}
\]
\end{proof}

We recall a definition of  a space of functions which
are not necessarily  Lebesgue measurable,  \cite[Remark 3.11]{HH-S_pp} .
 \begin{definition}\label{nL1space}
Let   $\Omega$ be a subset of $\Rn$, $n\geq 1$, and $0<\delta\le n$. We write
\[
\L^1(\Omega, \tHa^{\delta}_\infty):= \Big\{f: \Omega \to [-\infty, \infty] :   \int_\Omega |f|  d \Ha^{\delta}_\infty < \infty \Big\}.
\]
\end{definition}
Here, by a function we mean an equivalent class of functions that coincide $\tHa^\delta_\infty$-almost everywhere.
Property (I1) and  Theorem \ref{thm:sublin} 
imply that  $\L^1(\Omega, \Ha^{\delta}_\infty)$
is an $\R$-vector space. 
We denote
\begin{equation}\label{one_norm}
\|f\|_1 :=  \int_\Rn |f| \, d\tHa^\delta_\infty.
\end{equation}
 Theorem~\ref{thm:sublin} together (I1) yields the following proposition.  
 
 \begin{proposition}\label{norm}
 The notion
 $\|\cdot\|_1$ is a norm in $\L^1(\Omega, \tHa^{\delta}_\infty)$. 
\end{proposition}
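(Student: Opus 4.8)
The plan is to verify directly the four defining properties of a norm, drawing only on the integral properties already in hand. A point worth stressing at the outset is the role of Definition~\ref{nL1space}: elements of $\L^1(\Omega, \tHa^\delta_\infty)$ are equivalence classes of functions agreeing $\tHa^\delta_\infty$-almost everywhere, so the zero element is the class of functions vanishing $\tHa^\delta_\infty$-a.e. This is exactly the framework in which $\|\cdot\|_1$ can be definite rather than merely a seminorm. Non-negativity is then immediate: for any $f$ the integrand $|f|$ is non-negative, so the distribution function in \eqref{IntegralDef} is non-negative and hence $\|f\|_1 \ge 0$. For definiteness I would invoke property (I2): $\|f\|_1 = \int_\Rn |f| \, d\tHa^\delta_\infty = 0$ holds if and only if $|f| = 0$ for $\tHa^\delta_\infty$-almost every point, which is precisely the assertion that $f$ is the zero element of $\L^1(\Omega, \tHa^\delta_\infty)$.

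For absolute homogeneity I would first record the pointwise identity $|af| = |a|\,|f|$, valid for every scalar $a \in \R$, and then apply (I1) with the non-negative constant $|a|$ to obtain $\|af\|_1 = |a|\,\|f\|_1$; the case $a=0$ is trivial. The triangle inequality I would establish by combining monotonicity with sublinearity: from $|f+g| \le |f| + |g|$ pointwise, property (I5) gives $\int_\Rn |f+g|\, d\tHa^\delta_\infty \le \int_\Rn \big(|f|+|g|\big)\, d\tHa^\delta_\infty$, and then Theorem~\ref{thm:sublin}, applied to the two-term sequence $f_1 = |f|$, $f_2 = |g|$ with the remaining terms zero, yields $\int_\Rn \big(|f|+|g|\big)\, d\tHa^\delta_\infty \le \|f\|_1 + \|g\|_1$.

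I expect non-negativity and homogeneity to be purely routine, and the only step carrying genuine content to be the triangle inequality. The conceptual obstacle there is that the Choquet integral is in general \emph{not} additive, so one cannot simply split $\int_\Rn |f+g|\, d\tHa^\delta_\infty$ into a sum of two integrals. This difficulty, however, has already been absorbed into the earlier development: the analytic input is the strong subadditivity of $\tHa^\delta_\infty$ recorded in \eqref{st1}, which via Denneberg's theorem delivers the sublinearity of Theorem~\ref{thm:sublin}. Consequently the proof reduces to assembling (I1), (I2), (I5) and Theorem~\ref{thm:sublin}, and no further estimation is required.
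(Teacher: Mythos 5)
Your proposal is correct and follows essentially the same route as the paper, which justifies the proposition by exactly the ingredients you assemble: (I1) for homogeneity, Theorem~\ref{thm:sublin} (applied as sublinearity) for the triangle inequality, and the convention that elements are equivalence classes modulo $\tHa^\delta_\infty$-a.e.\ equality together with (I2) for definiteness. The paper leaves these verifications implicit in a single sentence; your write-up merely spells them out.
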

More information about $\L^1(\Omega, \Ha^{\delta}_\infty)$-spaces can be found in \cite[Chapter~3]{HH-S_pp}.
By $f \in \L^1_{\loc} (\Rn, \tHa^{\delta}_\infty)$ we mean that 
 $f \in \L^1(K, \tHa^{\delta}_\infty)$
whenever $K\subset \Rn$ is a bounded set.

\begin{theorem}\label{lem:point-wise_convergence}
Let $\delta \in(0,n]$ and $f \in \L^1 (\Rn, \tHa^{\delta}_\infty)$.
If $(f_i)$ is a sequence of $\L^1 (\Rn, \tHa^{\delta}_\infty)$-functions such that $\lim_{i \to \infty}\|f-f_i\|_1 =0$, then there exists a subsequence 
$(f_{i_j})$ which converges to $f$ pointwise
for 
$\tHa^\delta_\infty$-almost every $x \in \Rn$.
\end{theorem}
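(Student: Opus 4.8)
The plan is to adapt the classical Riesz-type argument that $L^1$ convergence forces almost-everywhere convergence along a subsequence, with the additivity of the Lebesgue integral replaced throughout by the countable sublinearity of the Choquet integral recorded in Theorem~\ref{thm:sublin}. The engine of the proof is to pass to a subsequence along which the convergence is summably fast, sum up the resulting differences into a single function, and argue that this function is finite almost everywhere.

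First I would extract the subsequence: since $\|f - f_i\|_1 \to 0$, for each $j \in \N$ there is an index $i_j$, with $i_1 < i_2 < \cdots$, such that
\[
\|f - f_{i_j}\|_1 = \int_{\Rn} |f - f_{i_j}| \, d\tHa^\delta_\infty \le 2^{-j}.
\]
Before forming any differences I would record a finiteness principle: if $\int_{\Rn} |h| \, d\tHa^\delta_\infty < \infty$, then $h$ is finite $\tHa^\delta_\infty$-almost everywhere. Indeed, $\{|h| = \infty\} \subset \{|h| > t\}$ for every finite $t \ge 0$, so by monotonicity (H2) a positive value of $\tHa^\delta_\infty(\{|h| = \infty\})$ would keep the distribution function bounded below by a positive constant and make the integral in \eqref{IntegralDef} diverge. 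Applying this to $f$ and to each $f_{i_j}$, and using subadditivity (H3) to see that a countable union of null sets is null, the functions $f$ and $(f_{i_j})$ are all finite off a single set of $\tHa^\delta_\infty$-measure zero, so the differences $|f - f_{i_j}|$ are well defined there.

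Next I would introduce the non-negative function $g := \sum_{j=1}^\infty |f - f_{i_j}|$ and estimate its Choquet integral by the sublinearity of Theorem~\ref{thm:sublin}:
\[
\int_{\Rn} g \, d\tHa^\delta_\infty \le \sum_{j=1}^\infty \int_{\Rn} |f - f_{i_j}| \, d\tHa^\delta_\infty \le \sum_{j=1}^\infty 2^{-j} = 1 < \infty.
\]
The finiteness principle above, now applied to $g$, gives $g(x) < \infty$ for $\tHa^\delta_\infty$-almost every $x$. At each such point the series $\sum_j |f(x) - f_{i_j}(x)|$ converges, so its terms tend to zero, which is precisely $f_{i_j}(x) \to f(x)$; discarding the union of all the null sets above yields the claimed $\tHa^\delta_\infty$-almost-everywhere convergence.

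The only genuine subtlety—and the place where this setting departs from the classical one—is that neither finiteness almost everywhere nor the interchange of summation and integration is automatic. The former I would draw from the layer-cake definition \eqref{IntegralDef} together with monotonicity (H2), and the latter is exactly the countable sublinearity of Theorem~\ref{thm:sublin}. Once these two facts are secured, no Lebesgue measurability of the $f_i$ enters the argument, and the remainder is the elementary observation that a convergent series of non-negative terms has terms tending to zero.
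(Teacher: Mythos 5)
Your proof is correct, and it takes a genuinely more direct route than the paper's. The paper runs the full Riesz--Fischer machinery: it observes that $(f_i)$ is Cauchy in $\|\cdot\|_1$, extracts a subsequence whose \emph{consecutive} differences are summable, sums $g=\sum_j |f_{i_j}-f_{i_{j+1}}|$ (controlling $\|g\|_1$ via the monotone convergence result, Proposition~\ref{lema:convergence}, combined with Theorem~\ref{thm:sublin}), reconstructs a pointwise limit $h$ of the subsequence as a telescoping series on $\{g<\infty\}$, and then needs Fatou's lemma (Proposition~\ref{cor:Fatou}) plus a closing triangle inequality to identify $h=f$ almost everywhere. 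You instead exploit the fact that the norm limit $f$ is handed to you in the hypothesis: you choose $i_j$ with $\|f-f_{i_j}\|_1\le 2^{-j}$, sum $g=\sum_j|f-f_{i_j}|$ directly, apply Theorem~\ref{thm:sublin} in the infinite-series form in which it is stated (so no monotone-convergence step is needed), and read off $f_{i_j}\to f$ at every point where $g$ is finite. This eliminates the auxiliary limit $h$, Fatou's lemma, and the final identification step. What the paper's longer route buys is essentially the completeness of $\L^1(\Rn,\tHa^{\delta}_\infty)$: its argument shows that any Cauchy sequence has an a.e.-convergent subsequence whose pointwise limit is also its norm limit, a stronger fact that your argument (which presupposes the norm limit) does not deliver --- though that stronger fact is not what the theorem asks for. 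Two details are actually handled more carefully in your write-up than in the paper: you make explicit the finiteness principle (a function with finite Choquet integral is finite $\tHa^{\delta}_\infty$-a.e., via the layer-cake definition \eqref{IntegralDef} and monotonicity (H2)), and you use it, together with subadditivity (H3), to ensure the differences $|f-f_{i_j}|$ are well defined off a single null set; the paper uses both facts silently.
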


\begin{proof}
Since $\|f_i - f_j\|_1 \le \|f_i -f\|_1 + \|f_j -f\|_1$
 by Proposition \ref{norm},
the standard argument yields 
that $(f_i)$ is a Cauchy sequence with respect to \ $\|\cdot\|_1$. Let us choose a suitable subsequence of $(f_i)$. 
Choose $i_1$ so that $\|f_i - f_j\|<\tfrac12$ when $i, j \ge i_1$. Assume that $i_1, \ldots\,,i_m$ have be chosen so that
 $\|f_i - f_j\|<\tfrac1{2^m}$ when $i, j \ge i_m$. Then we choose $i_{m+1}$ so that
 $\|f_i - f_j\|<\tfrac1{2^{m+1}}$ when $i, j \ge i_{m+1}$ and $i_{m+1} > i_m$. For the subsequence $(f_{i_j})$ we have
 \[
 \| f_{i_{j+1}}- f_{i_j} \|_1 < \frac{1}{2^{i_j}}.
 \]
 Let us write
 \[
 g_m(x) := \sum_{j=1}^m |f_{i_j}(x) - f_{i_{j+1}}(x) | \quad \text{and} \quad  g(x) := \sum_{j=1}^\infty |f_{i_j}(x) - f_{i_{j+1}}(x) |.
 \]
 Since $(g_m)$ is a pointwise increasing sequence, 
  Proposition~\ref{lema:convergence}  and Theorem~\ref{thm:sublin}  imply that
 \[
 \begin{split}
 \|g\|_1 &=  \|\lim_{m \to \infty} g_m\|_1 = \lim_{m \to \infty} \|g_m\|_1 = \lim_{m \to \infty} \int_\Rn  \sum_{j=1}^m |f_{i_j}(x) - f_{i_{j+1}}(x) | \, d \tHa^\delta_\infty(x)\\
 &\le \lim_{m \to \infty}   \sum_{j=1}^m \int_\Rn |f_{i_j}(x) - f_{i_{j+1}}(x) | \, d \tHa^\delta_\infty(x)
 \le  \sum_{j=1}^\infty \frac{1}{2^{i_j}} \le 1.
 \end{split}
 \]
 This means
  that $g$ is finite $\tHa^\delta_\infty$-almost everywhere. Let us  study the telescopic series
 $f_{i_1}(x) + \sum_{k=1}^\infty (f_{i_k}(x) - f_{i_{k+1}}(x))$. 
  In the points where $g(x)<\infty$, the telescopic series converges absolutely, and 
 thus the telescopic series  also  converges $\tHa^\delta_\infty$-almost everywhere.
 Let us denote 
 by $h(x)$  the limit in the points  where the telescopic series converges,
let $h(x)=0$ otherwise.
 Then we have
 \[
 \begin{split}
 h(x) &= f_{i_1}(x) + \sum_{j=1}^\infty (f_{i_j}(x) - f_{i_{j+1}}(x))
 = \lim_{m \to \infty}\Big( f_{i_1}(x) + \sum_{j=1}^{m-1} (f_{i_j}(x) - f_{i_{j+1}}(x))\Big)\\
 &= \lim_{m \to \infty}f_{i_m}(x)
 \end{split}
 \]
 on the points where the telescopic series converges. 
 
 Let us then show that $\lim_{i \to \infty}\|h-f_{i}\|_1 =0$. Let $\ve>0$.
 Since $(f_i)$ is a Cauchy-sequence there exists $i_\ve$ such that $\|f_i-f_j\|_1 < \ve$ when $i, j \ge i_\ve$.
 For a fixed $i$ we have $f_{i_j}(x) - f_i(x) \to h(x) - f_i(x)$ as $j \to \infty$ for $\tHa^\delta_\infty$-almost every $x \in \Rn$.
 Proposition~\ref{cor:Fatou} implies that
 \[
 \begin{split}
 \|h-f_i\|_1 = \|\lim_{j \to \infty} |f_{i_j} - f_i|\|_1 \le \liminf_{j \to \infty} \|f_{i_j} - f_i\| \le \ve
 \end{split}
 \]
when $i \ge i_\ve$. 

Finally, 
the assumption $\lim_{i \to \infty}\|f-f_i\|_1 =0$ and the previous step  yield that
\begin{equation*}
\|f-h\|_1 \le \|f-f_{i}\|_1 + \|f_{i}-h\|_1 \to 0 \mbox{ as  }i \to \infty\,.
\end{equation*}
Hence, $f(x)=h(x)$ for $\tHa^\delta_\infty$-almost every $x \in \Rn$.
\end{proof}

Let $\delta\in (0,n]$. We recall that  the   Hausdorff content  centred  maximal function is defined as
\begin{equation}\label{new}
\M^\delta  f(x) := \sup_{r>0} \frac{1}{\tHa_\infty^\delta (B(x, r))} \int_{B(x, r)} |f| \, d \tHa_\infty^\delta\,,
\end{equation}
\cite{ChenOoiSpector2024}, \cite[(4.1)]{HH-S_pp}.

We need the following result, which 
comes from  
\cite[Theorem A]{BCRS_pp} by choosing $\varphi(t) = t^{\delta}$ and noting that the different Hausdorff contents are comparable \cite[Proposition 2.3]{YangYuan08}.
This maximal function for non-measurable functions in the case $\delta =n$ have been studied also  in \cite{HH-S_pp}, 
where strong-type estimates have been proved.

\begin{theorem}
\cite[Theorem A]{BCRS_pp}
\label{thm:weak-type}
Let $\delta \in(0,n]$. Then there exists a constant $c$  depending only on $\delta$ and $n$ such that  
\begin{equation}\label{BCRS_thmA}
\tHa^\delta_\infty\Big(\{x \in \Rn: \M^{\delta} f(x) > t\} \Big)
\le \frac{c}{t} \int_\Rn |f| \, d \tHa^\delta_\infty
\end{equation}
for all  $f \in \L^1_{\loc} (\Rn, \tHa^{\delta}_\infty)$ and all $t >0$.
\end{theorem}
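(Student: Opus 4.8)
The statement is quoted as \cite[Theorem A]{BCRS_pp} specialized to $\varphi(t)=t^\delta$, so the cleanest route for the present paper is simply to invoke that theorem together with the comparability of the various $\delta$-dimensional contents \cite[Proposition 2.3]{YangYuan08}. If instead one wanted to prove \eqref{BCRS_thmA} directly, the plan is to run a Calderón--Zygmund argument, paying attention to the fact that $\tHa^\delta_\infty$ is only subadditive. We may assume $f\in\L^1(\Rn,\tHa^\delta_\infty)$, since otherwise the right-hand side is infinite. First I would dyadicize the operator. Because $\tHa^\delta_\infty(B(x,r))\approx r^\delta\approx\ell(Q)^\delta$ for any dyadic cube $Q$ of comparable side length, and because every ball is comparable to cubes drawn from finitely many shifted dyadic grids, the centred operator $\M^\delta$ is dominated pointwise, up to a constant $c(n,\delta)$, by a finite sum of dyadic maximal operators $\M^\delta_{\mathrm{dy}}f(x)=\sup_{Q\ni x}\ell(Q)^{-\delta}\int_Q|f|\,d\tHa^\delta_\infty$, the supremum taken over dyadic cubes. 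Thus it suffices to prove \eqref{BCRS_thmA} for each $\M^\delta_{\mathrm{dy}}$.

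Next, fix $t>0$ and perform a stopping-time selection: for each point of the level set choose a dyadic cube on which the content-average of $|f|$ exceeds $t$, and pass to the maximal such cubes $\{Q_j\}$. These are pairwise disjoint, and by the nestedness of dyadic cubes their union is exactly $\{\M^\delta_{\mathrm{dy}}f>t\}$. Monotonicity (H2) and subadditivity (H3) then give
\[
\tHa^\delta_\infty\big(\{\M^\delta_{\mathrm{dy}}f>t\}\big)=\tHa^\delta_\infty\Big(\bigcup_j Q_j\Big)\le\sum_j\ell(Q_j)^\delta<\frac1t\sum_j\int_{Q_j}|f|\,d\tHa^\delta_\infty.
\]

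The remaining inequality $\sum_j\int_{Q_j}|f|\,d\tHa^\delta_\infty\le C\int_\Rn|f|\,d\tHa^\delta_\infty$ is the main obstacle, and it is where the argument genuinely departs from the classical one. For an additive measure it is immediate from disjointness of the $Q_j$, but here it is \emph{false} for arbitrary disjoint dyadic cubes: subdividing a fixed cube into $2^{mn}$ congruent subcubes makes the left-hand sum grow like $2^{m(n-\delta)}$ while the content of the union stays bounded, precisely because strong subadditivity \eqref{st1} controls $\tHa^\delta_\infty$ of a disjoint union only from \emph{above}. Hence one cannot convert the sum of content-integrals into a single content-integral over the union; the maximality of the $Q_j$ alone does not repair this, as the attempt to verify a scale-by-scale Carleson packing bound reduces circularly to the same statement. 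The way through is the Frostman-type duality for dyadic Hausdorff content, by which $\int_E g\,d\tHa^\delta_\infty$ is comparable, with constants depending only on $n$ and $\delta$, to $\sup_\mu\int_E g\,d\mu$ over Radon measures $\mu$ satisfying the growth bound $\mu(Q)\le\ell(Q)^\delta$ on every dyadic cube $Q$. Replacing the content by such an honest, and therefore additive, measure restores the identity $\sum_j\int_{Q_j}|f|\,d\mu=\int_{\bigcup_jQ_j}|f|\,d\mu\le\int_\Rn|f|\,d\mu$, after which the bound $\int_\Rn|f|\,d\mu\le C\int_\Rn|f|\,d\tHa^\delta_\infty$ (valid for any such $\mu$, since it is dominated by the content) closes the estimate. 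Orchestrating this duality at all scales simultaneously, rather than cube by cube, is the technical heart of the matter, and it is exactly the work carried out in \cite[Theorem A]{BCRS_pp}.
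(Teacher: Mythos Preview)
Your first sentence is exactly how the paper handles this theorem: it is stated as \cite[Theorem A]{BCRS_pp} with the specialization $\varphi(t)=t^{\delta}$ and the comparability of contents from \cite[Proposition 2.3]{YangYuan08}, and no proof is given in the paper itself. The remainder of your proposal is a supplementary sketch of a direct argument; it is not needed for the paper, but your identification of the genuine obstacle---that disjointness of the stopping cubes does not by itself convert $\sum_j\int_{Q_j}|f|\,d\tHa^\delta_\infty$ into $\int_{\bigcup_j Q_j}|f|\,d\tHa^\delta_\infty$ when $\delta<n$---and of the Frostman-type duality as the way around it, is accurate and matches the spirit of the cited work.
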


\begin{remark}
The definition for the maximal operator in the present paper  \eqref{new}  goes back to
\cite{ChenOoiSpector2024}. For the resent results on the classical maximal  operators 
when  integrals  are taken in sense of Choquet with respect to Hausdorff content
we refer to
H. Saito, H. Tanaka,  and T. Watanabe 
\cite{SaitoTanakaWatanabe2016, Saito2019, SaitoTanakaWatanabe2019, SaitoTanaka2022}
 and H. Watanabe \cite{WatanabeH}.
\end{remark}

The following result to  the maximal operator $\M^n$   defined in \eqref{new}
when the Choquet integral is taken 
with respect to  $\tHa^\delta_\infty$, $\delta <n$  might be of independent interest.

\begin{theorem}\label{thm:weak-type-2}
Let $\delta \in(0,n)$. Then there exists a constant $c$  depending only on $\delta$ and $n$ such that  
\[
\tHa^\delta_\infty\Big(\{x \in \Rn: \M^{n} f(x) > t\} \Big)
\le \frac{c}{t} \int_\Rn |f| \, d \tHa^\delta_\infty
\]
for all  $f \in \L^1_{\loc} (\Rn, \tHa^{\delta}_\infty)$ and all $t >0$.
\end{theorem}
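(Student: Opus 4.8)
The plan is to reduce the statement to the weak-type estimate for $\M^\delta$ already recorded in Theorem~\ref{thm:weak-type} by establishing the pointwise domination $\M^n f(x) \le C(n,\delta)\,\M^\delta f(x)$ for every $x \in \Rn$. Granting this, monotonicity (H2) of $\tHa^\delta_\infty$ gives the inclusion $\{\M^n f > t\} \subset \{\M^\delta f > t/C\}$, and applying \eqref{BCRS_thmA} to the larger set yields the desired bound with constant $cC$.

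The geometric heart of the argument is the following comparison of contents on small sets: if $A \subset B(x,r)$, then
\[
\tHa^n_\infty(A) \le C(n)\, r^{n-\delta}\, \tHa^\delta_\infty(A).
\]
To prove it I would fix an arbitrary dyadic cube cover $(Q_i)$ of $A$ and split it into the cubes with $\ell(Q_i) \le r$ and those with $\ell(Q_i) > r$. For the former, $\ell(Q_i)^n = \ell(Q_i)^{n-\delta}\,\ell(Q_i)^\delta \le r^{n-\delta}\ell(Q_i)^\delta$, which is exactly the desired bound summand by summand. The oversized cubes are the only delicate point, and I expect this bookkeeping to be the main obstacle: such a cube cannot be shrunk, but the portion of $A$ it covers still lies in $B(x,r)$, so I would recover efficiency by covering $A \cap \bigcup_{\text{large}} Q_i \subset B(x,r)$ with a bounded number $N(n)$ of dyadic cubes of side comparable to $r$, whose total $n$-content is $\le N(n)(2r)^n = C(n)r^n$; since any single large cube already satisfies $\ell(Q_i)^\delta \ge r^\delta$, this contribution is controlled by $C(n)\,r^{n-\delta}\sum_{\text{large}}\ell(Q_i)^\delta$. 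Summing the two parts and taking the infimum over all covers produces the displayed inequality.

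The remaining steps are routine. Applying the comparison to each superlevel set $\{y \in B(x,r): |f(y)| > s\}$ and integrating in $s$ via the definition \eqref{IntegralDef} of the Choquet integral gives
\[
\int_{B(x,r)}|f|\,d\tHa^n_\infty \le C(n)\, r^{n-\delta}\int_{B(x,r)}|f|\,d\tHa^\delta_\infty.
\]
I also need two elementary ball estimates, $\tHa^n_\infty(B(x,r)) \ge c(n)\, r^n$ and $\tHa^\delta_\infty(B(x,r)) \le C(n)\, r^\delta$: the first follows from \eqref{compare_int} applied to $f=\chi_{B(x,r)}$, which yields $|B(x,r)| \le c(n)\,\tHa^n_\infty(B(x,r))$, while the second follows from covering $B(x,r)$ by boundedly many dyadic cubes of side comparable to $r$. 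Combining these with the integral comparison,
\[
\frac{1}{\tHa^n_\infty(B(x,r))}\int_{B(x,r)}|f|\,d\tHa^n_\infty \le \frac{C(n)\,r^{n-\delta}}{c(n)\,r^n}\int_{B(x,r)}|f|\,d\tHa^\delta_\infty \le \frac{C'(n,\delta)}{\tHa^\delta_\infty(B(x,r))}\int_{B(x,r)}|f|\,d\tHa^\delta_\infty,
\]
and taking the supremum over $r>0$ gives $\M^n f(x) \le C'(n,\delta)\,\M^\delta f(x)$, which completes the reduction to Theorem~\ref{thm:weak-type}.
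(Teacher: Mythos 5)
Your argument is correct, but it follows a genuinely different route from the paper. The paper's own proof is a two-line reduction: by the Chebyshev inequality for Choquet integrals (property (I1) together with the definition \eqref{IntegralDef}), $\tHa^\delta_\infty\big(\{\M^n f > t\}\big) \le \tfrac1t\int_\Rn \M^n f\, d\tHa^\delta_\infty$, and then it invokes the \emph{strong-type} estimate $\int_\Rn \M^n f\, d\tHa^\delta_\infty \le c\int_\Rn |f|\, d\tHa^\delta_\infty$ imported from the companion paper \cite[Theorem 4.6]{HH-S_pp}, together with the comparability of the various Hausdorff contents \cite[Proposition 2.3]{YangYuan08}. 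You instead prove the pointwise domination $\M^n f(x) \le C(n,\delta)\,\M^\delta f(x)$ and reduce to the weak-type bound of Theorem~\ref{thm:weak-type}. Your key comparison $\tHa^n_\infty(A) \le C(n)\, r^{n-\delta}\,\tHa^\delta_\infty(A)$ for $A \subset B(x,r)$ is sound, and your bookkeeping can even be lightened: if every cube of a dyadic cover has side $\le r$, the inequality holds term by term; if some cube has side $> r$, then $\sum_i \ell(Q_i)^\delta \ge r^\delta$, and one may discard the original cover entirely and cover all of $B(x,r)$ by $N(n)$ dyadic cubes of a single generation of side comparable to $r$ (choose one generation with side $> 4r$ and take the cubes meeting $B(x,2r)$, so that the \emph{interior} of their union contains $B(x,r)$, as the definition of $\tHa^\delta_\infty$ requires; the same cover gives your bound $\tHa^\delta_\infty(B(x,r)) \le C(n)\, r^\delta$). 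The remaining steps---integrating the level-set comparison, the lower bound $\tHa^n_\infty(B(x,r)) \ge c(n) r^n$ from \eqref{compare_int}, and the inclusion $\{\M^n f > t\} \subset \{\M^\delta f > t/C\}$---are all valid, and nothing uses Lebesgue measurability of $f$, as required. As for what each approach buys: the paper's proof is shorter but rests on a nontrivial Orobitg--Verdera-type strong estimate proved elsewhere (and in fact proves more, since strong-type implies weak-type); your proof is self-contained given Theorem~\ref{thm:weak-type}, which the paper already quotes, and produces as a by-product the clean pointwise inequality $\M^n f \le C\,\M^\delta f$, which complements the paper's recorded estimate \eqref{pointwise} and, unlike it, feeds directly into a weak-type reduction because no power $n/\delta$ appears.
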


\begin{proof}
Property  (I1), the strong-type estimate from \cite[Theorem 4.6]{HH-S_pp},  and \cite[Proposition 2.3]{YangYuan08} yield  that
\[
\tHa^\delta_\infty\Big(\{x \in \Rn: \M^{n} f(x) > t\} \Big)
\le  \int_\Rn \frac{1}{t} \M^{n} f(x) \, d \tHa^\delta_\infty
\le \frac{c}{t} \int_\Rn |f(x)] \, d \tHa^\delta_\infty,
\]
where  $c$ is a constant which depends only on  $n$ and $\delta$.
\end{proof}

We point out 
that Theorem~\ref{thm:weak-type} and Theorem \ref{thm:weak-type-2} 
give different benefits.
At first glance, Theorem~\ref{thm:weak-type} seems to be more natural.
On the other hand,
an estimate like  the inequality
\[
\int_\Omega \M^n f \, d \tHa^\delta_\infty \le \int_\Omega |f| \, d \tHa^\delta_\infty \quad \text{where} \quad \delta <n,  
\]
i.e. a strong-type version of Theorem~\ref{thm:weak-type-2}
 is beneficial to prove a $\delta$-dimensional Poincar\'e inequality; we refer to \cite{HH-S_JFA}.

We record also
a pointwise estimate for  maximal operators $\M^n$ and $\M^\delta$, where $\delta\in (0,n)$.  By \cite[(2.3) and Proposition 2.5]{HH-S_pp} 
it is known
that $\tHa^{\delta}_\infty (B(x, r)) \approx r^{\delta}$, 
Now \cite[Proposition 2.3]{HH-S_La} together with this fact imply the inequality
\[
\frac1{\tHa^n_\infty(B(x, r))} \int_{B(x, r)} |f| \, d \tHa^n_\infty
\le c \bigg( \frac1{\tHa^\delta_\infty(B(x, r))} \int_{B(x, r)} |f|^{\frac{\delta}{n}} \, d \tHa^\delta_\infty\bigg)^{\frac{n}{\delta}}\,,
\]
 where $c$ is a constant independent of the function $f$.
Hence for all $x\in \Rn$
 \begin{equation}\label{pointwise}
 \M^n f(x) \le c \big(\M^\delta (f^{\frac{\delta}{n}})(x)\big)^{\frac{n}{\delta}}\,,
 \end{equation}
where $c$ is a constant which depends only on the dimension $n$ and $\delta$.


\section{Lebesgue points and covergence results}\label{sec:Lebesgue points}

For a non-negative function $f \in \L^1_{\loc} (\Rn, \tHa^{\delta}_\infty)$  we write shortly
\begin{equation}\label{average}
f_{B(x,r)}^\delta:= \frac{1}{\tHa_\infty^\delta (B(x,r))} \int_{B(x, r)} f \, d \tHa_\infty^\delta.
 \end{equation}

\begin{proposition}\label{prop:limsup_f_is_measurable}
Let $\delta\in(0, n]$ and  $f \in \L^1_{\loc} (\Rn, \tHa^{\delta}_\infty)$ be a  non-negative function. Then
\begin{enumerate}
\item  $x \mapsto f_{B(x,r)}^\delta$ is lower semicontinuous;
\item  $\limsup_{r \to 0^+} f_{B(x,r)}^\delta$ is Lebesgue measurable.
\end{enumerate}
\end{proposition}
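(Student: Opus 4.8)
The plan is to write $f^\delta_{B(x,r)}=N(x,r)/D(x,r)$ with $N(x,r):=\int_{B(x,r)}f\,d\tHa^\delta_\infty$ and $D(x,r):=\tHa^\delta_\infty(B(x,r))$, and to prove (1) by showing, for fixed $r$, that the numerator $x\mapsto N(x,r)$ is lower semicontinuous and non-negative while the denominator $x\mapsto D(x,r)$ is continuous and strictly positive (recall $D(x,r)\approx r^\delta$). Granting this, $1/D$ is continuous and positive, and the product of a non-negative lower semicontinuous function with a positive continuous function is again lower semicontinuous, which gives (1). The crux is therefore entirely the continuity of $D$: indeed, testing (1) with $f=\chi_{B(x_0,r/2)}$ (for which $N(\cdot,r)$ is locally constant near $x_0$) shows that lower semicontinuity of the ratio at $x_0$ is \emph{equivalent} to upper semicontinuity of $D(\cdot,r)$ there, so no softer route avoids this point.

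First I would prove lower semicontinuity of $N$. Fix $x_0$ and $x_k\to x_0$. For every $\rho<r$ one has $B(x_0,\rho)\subseteq B(x_k,r)$ as soon as $|x_k-x_0|<r-\rho$, so (I4) gives $N(x_k,r)\ge N(x_0,\rho)$; letting $k\to\infty$ and then $\rho\uparrow r$, and using that $B(x_0,r)=\bigcup_{\rho<r}B(x_0,\rho)$ is an increasing union so that Proposition~\ref{lema:convergence} applied to $f\chi_{B(x_0,\rho)}$ yields $N(x_0,\rho)\to N(x_0,r)$, I obtain $\liminf_k N(x_k,r)\ge N(x_0,r)$. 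With $f\equiv1$ (via (I3)) the identical argument gives lower semicontinuity of $D(\cdot,r)$. For the matching upper bound on $D$ I would use $B(x_k,r)\subseteq B(x_0,r+|x_k-x_0|)$, so that by monotonicity $D(x_k,r)\le\tHa^\delta_\infty(B(x_0,r+|x_k-x_0|))$, and then sandwich between open and closed balls, applying (H4) to the decreasing compact balls $\overline B(x_0,r+1/m)$ to conclude $\limsup_k D(x_k,r)\le\tHa^\delta_\infty(\overline B(x_0,r))$.

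The main obstacle is the identity $\tHa^\delta_\infty(\overline B(x_0,r))=\tHa^\delta_\infty(B(x_0,r))$, equivalently right-continuity of $\rho\mapsto\tHa^\delta_\infty(B(x_0,\rho))$: this is exactly what upgrades the two one-sided estimates into continuity of $D$. The delicate regime is $\delta\le n-1$, where the sphere $\partial B(x_0,r)$ itself carries positive $\delta$-content, so subadditivity alone does not force the closure and the open ball to have equal content. The mechanism I would exploit is that in the defining infimum large cubes are always cheaper than small ones, since covering a fixed volume with side-$s$ cubes costs $\sim s^{\delta-n}$ with $\delta-n<0$; hence a near-optimal dyadic cover of $B(x_0,r)$ can be arranged to overshoot the sphere with a definite margin and thus already cover $\overline B(x_0,r)$ at essentially no extra cost. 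Making this precise—e.g. by passing to dyadic parents of the covering cubes and controlling the incurred factor—is the real work, and it is the step I expect to be technical.

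For (2) the argument is then soft and uses (1) as a black box. For each fixed $r$ the map $x\mapsto f^\delta_{B(x,r)}$ is lower semicontinuous, hence Borel. For each $k$ the function $G_k(x):=\sup_{0<r<1/k}f^\delta_{B(x,r)}$ is a supremum of a family of lower semicontinuous functions, hence itself lower semicontinuous and in particular Borel; and $\limsup_{r\to0^+}f^\delta_{B(x,r)}=\inf_k G_k$ is a countable infimum of Borel functions, so it is Borel and a fortiori Lebesgue measurable. Thus, apart from the continuity of the denominator flagged above, the whole proposition reduces to monotone convergence, monotonicity, and the stability of semicontinuity under suprema and infima.
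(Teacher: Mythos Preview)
Your plan---show the numerator $N(x,r)=\int_{B(x,r)}f\,d\tHa^\delta_\infty$ is lower semicontinuous and the denominator $D(x,r)=\tHa^\delta_\infty(B(x,r))$ is continuous in $x$---differs from the paper's route. The paper does not try to prove $D$ continuous; it simply uses $\tHa^\delta_\infty(B(x,r))=\tHa^\delta_\infty(B(y,r))$, i.e.\ that $D(\cdot,r)$ is \emph{constant}. With $D$ constant, $N/D$ is lower semicontinuous as soon as $N$ is, and the whole argument collapses to exactly your monotone-convergence step for $N$: choose $r_i<r$ via Proposition~\ref{lema:convergence} so that $\tfrac{1}{D(x,r)}\int_{B(x,r_i)}f>t$, then use $B(x,r_i)\subset B(y,r)$ for $y$ near $x$. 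Thus the paper sidesteps entirely the step you label ``the real work''.

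Your own attempts at that step, however, do not close the gap. The heuristic that near-optimal covers prefer large cubes and therefore overshoot $\partial B$ fails because dyadic cubes are half-open: the four unit dyadic squares meeting at the origin cover $B(0,1)\subset\R^2$, yet the interior $(-1,1)^2$ of their union $[-1,1)^2$ misses $(1,0)\in\partial B(0,1)$. Passing to dyadic parents multiplies the cost by $2^\delta$ rather than $1+o(1)$, and since a parent shares a full face with its child the interior of the parents' union need not strictly contain the original union anyway. Covering the sphere separately by small cubes has cost $o(1)$ only when $\delta>n-1$; for $\delta\le n-1$ the sphere already has positive $\delta$-content and this route gives nothing. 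So the identity $\tHa^\delta_\infty(\overline B)=\tHa^\delta_\infty(B)$---which you correctly recognise as equivalent to upper semicontinuity of $D$ and hence, via your test function $f=\chi_{B(x_0,r/2)}$, to part~(1) itself---remains unproved in your sketch. It is worth remarking that the paper's shortcut, exact translation invariance of the \emph{dyadic} content on balls, is itself not obvious since the dyadic grid is fixed; your diagnosis that this is the delicate point is on target. Your argument for part~(2) is correct and essentially the same as the paper's.
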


\begin{proof}
We first show that the set $\{x \in \Rn : f_{B(x, r)}^\delta>t\}$ is open for all $t \in \R$. We may assume that $t \ge 0$.
Let us fix $t\ge 0$ and then take  any point 
 $x\in \{u \in \Rn : f_{B(u, r)}^\delta>t\}$.
If $(r_i)$ be a sequence of positive real numbers converging to $r$ from below, then 
Proposition~\ref{lema:convergence}  implies that
\[
\lim_{i \to \infty} \int_{B(x, r)} f \chi_{B(x, r_i)} \, d \tHa_\infty^{\delta} =
\int_{B(x, r)} f \, d \tHa_\infty^{\delta}.
\]
Thus, there exists a number $r_i$ such that
\[
\frac{1}{\tHa_\infty^\delta (B(x,r))} \int_{B(x, r_i)} f \, d \tHa_\infty^\delta >t.
\]
Let 
$\eta >0$  
be so small that if $y \in B(x, \eta)$, then $B(x, r_i) \subset B(y, r)$.
Since $\tHa_\infty^\delta (B(x,r)) = \tHa_\infty^\delta (B(y,r))$,
we obtain for every $y \in B(x, \eta)$ that
\[
\begin{split}
f_{B(y, r)}^\delta &= \frac{1}{\tHa_\infty^\delta (B(y,r))} \int_{B(y, r)} f \, d \tHa_\infty^\delta 
\ge \frac{1}{\tHa_\infty^\delta (B(y,r))} \int_{B(x, r_i)} f \, d \tHa_\infty^\delta \\
&= \frac{\tHa_\infty^\delta (B(x,r))}{\tHa_\infty^\delta (B(y,r))} \frac{1}{\tHa_\infty^\delta (B(x,r))} \int_{B(x, r_i)} f \, d \tHa_\infty^\delta >t\,.
\end{split}
\]
Hence, 
$B(x, \eta) \subset \{u\in \Rn : f_{B(u, r)}^\delta>t\}$.
The claim (2) is clear,
since the limit
function $\limsup_{r \to 0^+} f_{B(x,r)}^\delta$ is Lebesgue measurable as a pointwise limit of Lebesgue measurable functions.
\end{proof}

\begin{proposition}\label{prop:limit_is_measurable}
Let $\delta, \delta'\in(0, n]$  be given.
If $f \in \L^1_{\loc} (\Rn, \tHa^{\delta}_\infty)$, and $f(x) = \limsup_{r \to 0^+}f_{B(x,r)}^\delta$ for $\tHa^{\delta'}_\infty$-almost every $x \in \Rn$, then $f$ is Lebesgue measurable.
\end{proposition}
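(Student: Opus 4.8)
The plan is to reduce measurability of $f$ to measurability of the average-limsup function, whose Lebesgue measurability was already established in the previous proposition, and then to absorb the discrepancy set into a Lebesgue-null set. Concretely, set
\[
g(x) := \limsup_{r \to 0^+} f_{B(x,r)}^\delta .
\]
By Proposition~\ref{prop:limsup_f_is_measurable}(2) the function $g$ is Lebesgue measurable. The hypothesis states precisely that $f = g$ outside the set $N := \{x \in \Rn : f(x) \neq g(x)\}$, which satisfies $\tHa^{\delta'}_\infty(N) = 0$. Thus the entire problem collapses to showing that a set of vanishing $\delta'$-dimensional dyadic Hausdorff content is Lebesgue-null, after which completeness of Lebesgue measure finishes the argument.

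The key step, and the one place where the interplay between the two parameters $\delta$ and $\delta'$ enters, is the comparison $\tHa^{\delta'}_\infty(N) = 0 \Rightarrow |N| = 0$, where $|\cdot|$ denotes Lebesgue outer measure. Here I would fix $\ve \in (0,1)$ and use the definition of $\tHa^{\delta'}_\infty$ to choose a dyadic cube cover $(Q_i)$ with $N \subset \interior\big(\bigcup_i Q_i\big)$ and $\sum_i \ell(Q_i)^{\delta'} < \ve$. Since each term satisfies $\ell(Q_i)^{\delta'} \le \ve < 1$ and $\delta' > 0$, every cube has side length $\ell(Q_i) < 1$; consequently, using $n - \delta' \ge 0$,
\[
\ell(Q_i)^{n} = \ell(Q_i)^{\delta'}\,\ell(Q_i)^{n-\delta'} \le \ell(Q_i)^{\delta'} .
\]
Summing gives $\sum_i \ell(Q_i)^n \le \sum_i \ell(Q_i)^{\delta'} < \ve$, so the Lebesgue measure of $\bigcup_i Q_i$ is at most $\ve$ (up to the standard normalization of cube volume). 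As $\ve \in (0,1)$ was arbitrary, $|N| = 0$. Note that this computation works uniformly for all $\delta' \in (0,n]$, with the borderline case $\delta' = n$ reducing to equality.

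Finally I would conclude as follows. Since $N$ has Lebesgue outer measure zero, it is Lebesgue measurable with $|N| = 0$, and therefore $f = g$ Lebesgue-almost everywhere. As $g$ is Lebesgue measurable and the Lebesgue measure is complete, altering $g$ on the null set $N$ preserves measurability; hence $f$ is Lebesgue measurable. I expect the main obstacle to be precisely the content-to-Lebesgue comparison above: one must be careful that vanishing \emph{$\delta'$-content} (rather than vanishing Hausdorff \emph{measure}, or content in the critical dimension $n$) still forces Lebesgue nullity, and the cube-size bound $\ell(Q_i) < 1$ forced by smallness of the $\delta'$-sum is exactly what makes this work without any further hypothesis relating $\delta'$ to $n$.
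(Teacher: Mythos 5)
Your proposal is correct and follows essentially the same route as the paper: both arguments identify $f$ outside a $\tHa^{\delta'}_\infty$-null set with the function $g(x)=\limsup_{r\to 0^+} f^{\delta}_{B(x,r)}$, which is Lebesgue measurable by Proposition~\ref{prop:limsup_f_is_measurable}(2), show that the exceptional set is Lebesgue-null, and conclude by completeness of Lebesgue measure. The only difference is in that middle step: the paper cites the standard chain (zero $\delta'$-content implies zero $n$-dimensional Hausdorff measure implies zero Lebesgue outer measure), whereas you prove the implication directly via the dyadic covering estimate $\sum_i \ell(Q_i)^n \le \sum_i \ell(Q_i)^{\delta'} < \ve$, which is a self-contained rendering of the same fact.
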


\begin{proof}
Suppose that 
$f(x) = \limsup_{r \to 0^+}f_{B(x,r)}^\delta$
for all $x \in \Rn\setminus E$ and $\tHa^{\delta'}_\infty(E) =0$. Then the set $E$ has   a zero 
$n$-dimensional Hausdorff  measure, and hence  it has also  a  zero $n$-dimensional Lebesgue outer measure. 
Thus the set $E$ is Lebesgue measurable and  its Lebesgue measure $|E|=0$. The function 
$f(x) = \limsup_{r \to 0^+}f_{B(x,r)}^\delta$
is Lebesgue measurable by Proposition~\ref{prop:limsup_f_is_measurable}, and thus $f$ coincides to a 
Lebesgue measurable function Lebesgue almost everywhere. Hence $f$ is 
Lebesgue measurable.
\end{proof}

From now on let us  define for each function $f \in \L^1_{\loc} (\Rn, \tHa^{\delta}_\infty)$  
a  corresponding function  $f^\delta_*$,    
\begin{equation}
\begin{split}\label{tahti}
f^\delta_*(x) &:= \limsup_{r \to 0^+} |f-f(x)|_{B(x,r)}^\delta\\ 
&= \limsup_{r \to 0^+} \frac{1}{\tHa_\infty^\delta (B(x,r))} \int_{B(x, r)} |f(y) - f(x)| \, d \tHa_\infty^\delta(y).
\end{split}
\end{equation}

The proofs for Lemmata \ref{lem:step1} -- \ref{lem:step2} are modifications of  the  classical  
case  for Lebesgue measurable functions.
The classical Lebesgue case
can be found in \cite[Section 3.4]{Folland}, \cite[Section 2.3]{Kin}, and \cite[Theorem 1.3.8]{Ziemer89}, for example.
We point out that functions in
Lemmata \ref{lem:step1} -- \ref{lem:step2} are allowed to be Lebesgue non-measurable.

\begin{lemma}\label{lem:step1}
Let $\delta\in(0, n]$, and $f, g \in \L^1_{\loc} (\Rn, \tHa^{\delta}_\infty)$. Then
\begin{enumerate}
\item $(f+g)^\delta_*(x) \le f^\delta_*(x) + g^\delta_*(x)$  for all $x \in \Rn$;
\item $f^\delta_*(x) \le \M^{\delta} f(x) + f(x)$ for all $x \in \Rn$.
\end{enumerate}
\end{lemma}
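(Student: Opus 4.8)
The plan is to prove both inequalities first at the level of the averaging functionals $|f-f(x)|^\delta_{B(x,r)}$ for each fixed radius $r>0$, and then to pass to the limit superior as $r \to 0^+$. The only genuine difference from the classical Lebesgue-point argument is that wherever that argument splits the integral of a sum using additivity, we must instead invoke the sublinearity of the Choquet integral supplied by Theorem~\ref{thm:sublin}, which holds because $\tHa^\delta_\infty$ is strongly subadditive.

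For part~(1) I fix $x \in \Rn$ and $r>0$ and start from the pointwise bound $|(f+g)(y)-(f+g)(x)| \le |f(y)-f(x)| + |g(y)-g(x)|$, valid for every $y$. Integrating over $B(x,r)$ by monotonicity (I5) and then splitting by Theorem~\ref{thm:sublin} gives
\[
\int_{B(x,r)} |(f+g)(y)-(f+g)(x)| \, d\tHa^\delta_\infty(y) \le \int_{B(x,r)} |f(y)-f(x)| \, d\tHa^\delta_\infty(y) + \int_{B(x,r)} |g(y)-g(x)| \, d\tHa^\delta_\infty(y).
\]
Dividing by $\tHa^\delta_\infty(B(x,r))$ and taking $\limsup_{r\to 0^+}$, the elementary subadditivity $\limsup(a_r+b_r) \le \limsup a_r + \limsup b_r$ of the limit superior yields the claim.

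For part~(2) I again fix $x$ and $r$ and use $|f(y)-f(x)| \le |f(y)| + |f(x)|$. Monotonicity (I5) and Theorem~\ref{thm:sublin} split the average into two pieces: the average of $|f(y)|$ over $B(x,r)$, which is at most $\M^\delta f(x)$ straight from the definition of the maximal function in~\eqref{new}, and the average of the constant $|f(x)|$. Because $|f(x)|$ does not depend on the integration variable $y$, properties (I1) and (I3) give $\int_{B(x,r)} |f(x)| \, d\tHa^\delta_\infty(y) = |f(x)|\,\tHa^\delta_\infty(B(x,r))$, so that its average is exactly $|f(x)|$. Letting $r \to 0^+$ produces $f^\delta_*(x) \le \M^\delta f(x) + |f(x)|$, which is the stated bound for the non-negative functions considered here.

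I do not expect a serious obstacle: the estimates are direct once the triangle inequality is in place. The one point requiring attention — and the precise spot where the classical proof is modified — is that a sum of two non-negative integrands can no longer be separated by additivity but only by the sublinear inequality of Theorem~\ref{thm:sublin}; the evaluation of the constant-function average through (I1) and (I3) is the second small ingredient specific to the Choquet setting.
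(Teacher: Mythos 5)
Your proof is correct and follows essentially the same route as the paper's: the pointwise triangle inequality, monotonicity (I5), the sublinearity of Theorem~\ref{thm:sublin}, and subadditivity of the limit superior, with the only cosmetic difference that you work at fixed $r$ before taking the $\limsup$. Your extra care in part (2) --- obtaining $\M^\delta f(x) + |f(x)|$ via (I1) and (I3) and noting that this matches the stated bound $\M^\delta f(x) + f(x)$ for non-negative $f$ --- is, if anything, slightly more precise than the paper, which writes $f(x)$ directly.
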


\begin{proof}
Property (I5) and Theorem~\ref{thm:sublin} give
\[
\begin{split}
(f+g)^\delta_*(x) &\le \limsup_{r \to 0^+}\frac{1}{\tHa_\infty^\delta (B(x,r))} \int_{B(x, r)} |f(y)- f(x)| + |g(y)- g(x)| \, d \tHa_\infty^\delta(y)\\
&
\le \limsup_{r \to 0^+}\frac{1}{\tHa_\infty^\delta (B(x,r))} \Big(\int_{B(x, r)} |f(y)- f(x)| \, d \tHa_\infty^\delta(y) \\
&\qquad+
\int_{B(x, r)} |g(y)-g(x)| \, d \tHa_\infty^\delta(y) \Big)\\
& \le f^\delta_*(x) + g^\delta_*(x).
\end{split}
\]
 In a similar manner,
\[
f^\delta_*(x) \le  \limsup_{r \to 0^+} \frac{1}{\tHa_\infty^\delta (B(x,r))} \int_{B(x, r)} |f(y)| +  |f(x)| \, d \tHa_\infty^\delta(y) \le \M^\delta f(x) + f(x).
\]
\end{proof}

We write $C(\Rn)$ for continuous functions defined on $\Rn$
and $C_0(\Rn)$ for continuous functions with compact support on $\Rn$.

\begin{lemma}\label{lem:step3}
Let $\delta\in(0, n]$, $f \in \L^1_{\loc} (\Rn, \tHa^{\delta}_\infty)$  and $\phi \in C(\Rn)$. Then
\begin{enumerate}
\item $\phi^\delta_*(x) =0$ for all $x \in \Rn$;
\item $(f-\phi)^\delta_*(x) = f^\delta_*(x)$ for all $x \in \Rn$.
\end{enumerate}
\end{lemma}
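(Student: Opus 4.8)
The plan is to prove (1) directly from the continuity of $\phi$ together with the fact that the Choquet integral of a constant over a ball equals that constant times $\tHa_\infty^\delta(B(x,r))$, and then to deduce (2) from (1) via the subadditivity already established in Lemma~\ref{lem:step1}.

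For (1), I would fix $x \in \Rn$ and $\ve > 0$. By continuity of $\phi$ at $x$ there is $\rho > 0$ with $|\phi(y) - \phi(x)| \le \ve$ whenever $y \in B(x, \rho)$. For every $0 < r < \rho$ the integrand $y \mapsto |\phi(y) - \phi(x)|$ is then bounded above by the constant $\ve$ on $B(x,r)$, so property (I5) gives
\[
\frac{1}{\tHa_\infty^\delta(B(x,r))} \int_{B(x,r)} |\phi(y) - \phi(x)| \, d\tHa_\infty^\delta(y) \le \frac{1}{\tHa_\infty^\delta(B(x,r))} \int_{B(x,r)} \ve \, d\tHa_\infty^\delta(y).
\]
The key identity is that, by (I1) and (I3), $\int_{B(x,r)} \ve \, d\tHa_\infty^\delta = \ve\, \tHa_\infty^\delta(B(x,r))$, so the right-hand side equals $\ve$. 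Taking $\limsup_{r \to 0^+}$ yields $\phi^\delta_*(x) \le \ve$, and letting $\ve \to 0$ gives $\phi^\delta_*(x) = 0$.

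For (2), I would first note that $\phi \in C(\Rn)$ is bounded on every bounded set, so $\phi \in \L^1_{\loc}(\Rn, \tHa_\infty^\delta)$ and Lemma~\ref{lem:step1} applies to the pairs $f, \phi$ and $f, -\phi$. Observe also that $(-\phi)^\delta_*(x) = \phi^\delta_*(x) = 0$, since the integrand in \eqref{tahti} depends only on $|\phi(y) - \phi(x)|$. Writing $f = (f - \phi) + \phi$ and applying Lemma~\ref{lem:step1}(1) gives $f^\delta_*(x) \le (f-\phi)^\delta_*(x) + \phi^\delta_*(x) = (f-\phi)^\delta_*(x)$; applying it instead to $f - \phi = f + (-\phi)$ gives $(f-\phi)^\delta_*(x) \le f^\delta_*(x) + (-\phi)^\delta_*(x) = f^\delta_*(x)$. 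Combining the two inequalities yields the claimed equality for every $x \in \Rn$.

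The computations are entirely routine; the only point needing a little care is the constant-integral identity $\int_{B(x,r)} \ve \, d\tHa_\infty^\delta = \ve\, \tHa_\infty^\delta(B(x,r))$, which is precisely what supplies the normalization making the average of a constant equal to that constant. One should also keep in mind that all the relevant integrands are non-negative even though $f$ may change sign, so that property (I5) and the sublinearity of Theorem~\ref{thm:sublin} (used through Lemma~\ref{lem:step1}) are legitimately applicable.
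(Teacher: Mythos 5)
Your proof is correct and follows essentially the same route as the paper: part (1) from continuity together with the average-of-a-constant identity (which the paper leaves implicit but you rightly justify via (I1), (I3), (I5)), and part (2) from the subadditivity $(g+h)^\delta_* \le g^\delta_* + h^\delta_*$ applied in both directions with $\phi^\delta_* = (-\phi)^\delta_* = 0$. The only cosmetic difference is that the paper re-derives one of the two inequalities in (2) inline via (I5) and Theorem~\ref{thm:sublin} instead of citing Lemma~\ref{lem:step1}(1) twice, which is the same argument.
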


\begin{proof}
Let $\phi \in C(\Rn)$ and $x \in \Rn$. For every $\ve >0$ we found $\delta >0$ such that $|\phi(y) - \phi(x)|<\ve$ when $|y-x|<\delta$. Thus for all $r<\delta$ we find that $0\le \phi^\delta_*(x) < \ve$, and the property (1) follows.

By  propery (I5), Theorem~\ref{thm:sublin}, and the previous case we obtain
\[
\begin{split}
f^\delta_*(x) &= \limsup_{r \to 0^+} \frac{1}{\tHa_\infty^\delta (B(x,r))} \int_{B(x, r)} |f(y) - f(x)| \, d\tHa_\infty^\delta(y)\\ 
&\le \limsup_{r \to 0^+} \frac{1}{\tHa_\infty^\delta (B(x,r))} \int_{B(x, r)} |f(y)-\phi(y) - f(x) + \phi(x)| \\
&\qquad\qquad \qquad\qquad\qquad\qquad + |\phi(y) - \phi(x)| \, d\tHa_\infty^\delta(y)\\
& \le (f-\phi)^\delta_*(x) + \phi^\delta_*(x)= (f-\phi)^\delta_*(x) \,.
\end{split}
\]
 Lemma~\ref{lem:step1}(1) implies that
\[
(f-\phi)^\delta_*(x) \le f^\delta_*(x) + (-\phi)^\delta_*(x)  = f^\delta_*(x). \qedhere
\]
\end{proof}



\begin{lemma}\label{lem:step2}
Let $\delta \in (0, n]$.
If  $f \in \L^1 (\Rn, \tHa^{\delta}_\infty)$, then
\[
\tHa_\infty^\delta \Big( \{x \in \Rn : f^{\delta}_*(x) > \lambda \} \Big) \le \frac{c}{\lambda} \int_\Rn |f| \, d \tHa^\delta_\infty
\] 
for all $\lambda >0$, where $c$  is a constant  which depends only on $\delta$ and $n$.
\end{lemma}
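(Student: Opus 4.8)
The plan is to avoid the route taken in the classical Lebesgue proof---where one approximates $f$ by a continuous function $\phi$, uses $\phi^\delta_* = 0$ from Lemma~\ref{lem:step3}, and appeals to density of continuous functions---because continuous functions need not be dense in $\L^1(\Rn, \tHa^\delta_\infty)$ when $\delta < n$ (this is exactly what the examples of Section~\ref{sec:sharpness} show). Instead I would derive the estimate directly from the pointwise control already in hand.

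First I would record the pointwise bound. Exactly as in Lemma~\ref{lem:step1}(2), the triangle inequality $|f(y) - f(x)| \le |f(y)| + |f(x)|$ together with (I5) and Theorem~\ref{thm:sublin} gives
\[
f^\delta_*(x) \le \M^\delta f(x) + |f(x)| \qquad \text{for all } x \in \Rn.
\]
Consequently, whenever $f^\delta_*(x) > \lambda$, at least one of $\M^\delta f(x) > \lambda/2$ or $|f(x)| > \lambda/2$ must hold, so
\[
\{x \in \Rn : f^\delta_*(x) > \lambda\} \subset \{x \in \Rn : \M^\delta f(x) > \tfrac\lambda2\} \cup \{x \in \Rn : |f(x)| > \tfrac\lambda2\}.
\]
Applying monotonicity (H2) and subadditivity (H3) of $\tHa^\delta_\infty$, it then suffices to bound the two contents on the right.

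The first term is handled by the weak-type estimate Theorem~\ref{thm:weak-type} (applicable since $f \in \L^1 \subset \L^1_{\loc}$), which gives $\tHa^\delta_\infty(\{\M^\delta f > \lambda/2\}) \le \tfrac{2c}{\lambda}\int_\Rn |f|\, d\tHa^\delta_\infty$. For the second term I would establish a Chebyshev-type inequality straight from the definition~\eqref{IntegralDef}: since the distribution function $t \mapsto \tHa^\delta_\infty(\{|f| > t\})$ is decreasing by (H2),
\[
\int_\Rn |f| \, d\tHa^\delta_\infty = \int_0^\infty \tHa^\delta_\infty(\{|f| > t\}) \, dt \ge \int_0^{\lambda/2} \tHa^\delta_\infty(\{|f| > t\}) \, dt \ge \frac{\lambda}{2}\, \tHa^\delta_\infty\big(\{|f| > \tfrac\lambda2\}\big),
\]
whence $\tHa^\delta_\infty(\{|f| > \lambda/2\}) \le \tfrac{2}{\lambda}\int_\Rn |f|\, d\tHa^\delta_\infty$. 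Summing the two bounds yields the claim with $c' = 2c + 2$, depending only on $\delta$ and $n$.

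Since every ingredient---the pointwise bound, the weak-type inequality for $\M^\delta$, the Chebyshev estimate, and finite subadditivity---is already available, I expect no serious obstacle. The only point requiring care is conceptual rather than technical: the classical continuous-approximation argument is unavailable in this Choquet setting, which is precisely why the direct splitting via Lemma~\ref{lem:step1}(2) is the right move.
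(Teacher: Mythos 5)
Your proof is correct and is essentially the paper's own argument: the pointwise bound of Lemma~\ref{lem:step1}(2), the splitting of $\{x \in \Rn : f^\delta_*(x) > \lambda\}$ into $\{\M^\delta f > \lambda/2\} \cup \{f > \lambda/2\}$, Theorem~\ref{thm:weak-type} for the maximal-function part, a Chebyshev estimate for the other part, and (H3) to combine. The only immaterial differences are that you derive Chebyshev from the layer-cake definition \eqref{IntegralDef} while the paper integrates the pointwise inequality $\chi_{\{f > \lambda/2\}} \le 2f/\lambda$ via (I1), and that your framing about avoiding a density argument is moot---neither the paper nor the classical proof uses density for this weak-type lemma (approximation by continuous functions enters only later, in the proof of Theorem~\ref{thm:main}).
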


\begin{proof}
Lemma~\ref{lem:step1}(2) implies that
\[
\begin{split}
 \{x \in \Rn : f^\delta_*(x) > \lambda \} &\subset  \{x \in \Rn : \M^{\delta} f(x) + f(x) > \lambda \}\\
 &\subset   \{x \in \Rn : \M^{\delta} f(x)  > \lambda/2 \} \bigcup  \{x \in \Rn : f(x) > \lambda/2 \}.
\end{split}
\] 
For the first set on the right hand side we use the weak-type estimate of the maximal operator, 
Theorem~\ref{thm:weak-type},  and obtain
\[
\begin{split}
\tHa^\delta_\infty \Big(\{x \in \Rn : \M^{\delta} f(x)  > \lambda/2 \} \Big) 
\le \frac{c}{\lambda} \int_\Rn f \, d \tHa^\delta_\infty.
\end{split}
\] 
For the second therm on the right hand side, property (I1) gives
\[
\tHa^\delta_\infty \Big(\{x \in \Rn : f(x)  > \lambda/2 \} \Big) 
\le  \int_\Rn \frac{2 f}{\lambda} \, d \tHa^\delta_\infty
= \frac{2}{\lambda} \int_\Rn f \, d \tHa^\delta_\infty.
\] 
Combining the previous estimates and using (H3) yield the claim.
\end{proof}

So far we have considered functions which have been allowed to be Lebesgue non-measurable.
Next we introduce an assumption that leads us to study their relationship to  Lebesgue measurable functions. 


\begin{definition}\label{nlimitdef}
Suppose that $\delta \in (0, n]$.
We say that   a function  $f \in \L^1 (\Rn, \tHa^{\delta}_\infty)$  is an 
\emph{$\L^1$-limit of  continuous functions} if there exists a sequence 
 of continuous functions $\phi_i$ such that 
\[
\int_{\Rn} |f - \phi_i| \, d \tHa_\infty^\delta \to 0
\]
as $i \to \infty$. 
\end{definition}

Since 
$\|\cdot\|_1$ is a norm in the space $\L^1(\Omega, \tHa^{\delta}_\infty)$
by Proposition ~\ref{norm}, 
the triangle inequality gives
$\|\phi_i\|_1 \le \|\phi_i - f\|_1 + \|f\|_1$. Hence we  may  assume that each function  $\phi_i$ in Definition~\ref{nlimitdef} belongs to  the space $\L^1 (\Rn, \tHa^{\delta}_\infty)$.

\begin{theorem}\label{thm:measurable}
Suppose that $\delta\in(0, n]$ and  
 a function
$f \in \L^1 (\Rn, \tHa^{\delta}_\infty)$. 
\begin{enumerate}
\item If $\delta =n$ and  $f$ is Lebesgue measurable, then $f$
is an $\L^1$-limit of continuous functions.
\item If $f$
is an $\L^1$-limit of continuous functions, then $f$ is Lebesgue measurable.
\end{enumerate}
\end{theorem}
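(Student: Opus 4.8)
The plan is to prove the two implications separately: for part (1) I would exploit the comparison between the Choquet integral with respect to $\tHa^n_\infty$ and the ordinary Lebesgue integral recorded in \eqref{compare_int}, while for part (2) I would extract a pointwise-convergent subsequence via Theorem~\ref{lem:point-wise_convergence} and then transfer measurability through an exceptional set of measure zero.

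For part (1) I would first note that since $\delta=n$ and $f$ is Lebesgue measurable, the right-hand inequality in \eqref{compare_int} gives $\int_\Rn |f|\,dx \le c(n)\int_\Rn |f|\,d\tHa^n_\infty<\infty$, so $f$ belongs to the ordinary Lebesgue space $L^1(\Rn)$. By the classical density of $C_0(\Rn)$ in $L^1(\Rn)$, I would choose continuous functions $\phi_i$ with compact support so that $\int_\Rn |f-\phi_i|\,dx\to 0$. Because $f-\phi_i$ is again Lebesgue measurable, the left-hand inequality in \eqref{compare_int}, applied to $f-\phi_i$, yields
\[
\int_\Rn |f-\phi_i|\,d\tHa^n_\infty \le c(n)\int_\Rn |f-\phi_i|\,dx \to 0,
\]
which is precisely the assertion that $f$ is an $\L^1$-limit of continuous functions in the sense of Definition~\ref{nlimitdef}. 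Note this step genuinely uses $\delta=n$, since \eqref{compare_int} is available only in that case.

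For part (2), valid for all $\delta\in(0,n]$, suppose $f$ is an $\L^1$-limit of continuous functions, so there are continuous $\phi_i$ with $\|f-\phi_i\|_1\to 0$; by the remark following Definition~\ref{nlimitdef} I may assume $\phi_i\in \L^1(\Rn,\tHa^\delta_\infty)$. Then Theorem~\ref{lem:point-wise_convergence} furnishes a subsequence $(\phi_{i_j})$ converging to $f$ pointwise for $\tHa^\delta_\infty$-almost every $x$, that is, outside a set $E$ with $\tHa^\delta_\infty(E)=0$. Exactly as in the proof of Proposition~\ref{prop:limit_is_measurable}, vanishing $\delta$-dimensional content forces $E$ to have zero $n$-dimensional Hausdorff measure, hence zero Lebesgue measure. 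Setting $h:=\limsup_{j\to\infty}\phi_{i_j}$, the function $h$ is Lebesgue measurable as a $\limsup$ of continuous functions, and $h=f$ on $\Rn\setminus E$; since $|E|=0$, the functions $f$ and $h$ agree Lebesgue-almost everywhere, so $f$ is Lebesgue measurable.

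The only genuinely delicate point is the passage from a $\tHa^\delta_\infty$-null exceptional set to a Lebesgue-null one, and it is what makes part (2) work across the full range $\delta\in(0,n]$: a set of vanishing $\delta$-dimensional dyadic Hausdorff content has vanishing $\delta$-dimensional Hausdorff measure, and since $\delta\le n$ this forces vanishing $n$-dimensional Hausdorff measure, hence vanishing Lebesgue measure. This is the same mechanism already established in Proposition~\ref{prop:limit_is_measurable}, so I would simply invoke it rather than reprove it; the remaining steps are routine once the measure-zero identification and the $\limsup$ trick (which produces an everywhere-defined Lebesgue measurable representative agreeing with $f$ almost everywhere) are in place.
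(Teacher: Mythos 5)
Your proposal is correct and follows essentially the same route as the paper's own proof: part (1) via the comparison \eqref{compare_int} and density of $C_0(\Rn)$ in $L^1(\Rn)$, and part (2) via the pointwise-convergent subsequence from Theorem~\ref{lem:point-wise_convergence} together with the observation that a $\tHa^\delta_\infty$-null set is Lebesgue null. Your write-up is if anything slightly more explicit than the paper's (spelling out the application of \eqref{compare_int} to $f-\phi_i$ and the $\limsup$ construction of an everywhere-defined measurable representative), but the substance is identical.
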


\begin{remark}
In Section~\ref{sec:sharpness} we give two examples of Lebesgue measurable functions belonging
 to $\L^1 (\Rn, \tHa^{\delta}_\infty)$, $\delta <n$, that are not $\L^1$-limit of 
 continuous functions. Hence, the assumption $\delta=n$ in
Theorem \ref{thm:measurable}(1) is necessary.
\end{remark}

\begin{proof}[Proof of Theorem~\ref{thm:measurable}]
Assume first that $f$ is a Lebesgue measurable function. 
If
$\delta =n$,  we have
\[
\int_{\Rn} |f| \, dx \approx \int_\Rn |f| \, d \tHa^n_\infty 
\]
by Remark~\ref{lemma_a}. Here the implicit constant depends only on $n$. This yields that $f$ belongs to the 
(ordinary) Lebesgue space $L^1(\Rn)$. It is well known, see for example \cite[Theorem 2.19]{AdamsRA}, that $C_0(\Rn)$-functions are dense 
in $L^1(\Rn)$. Thus there exists a $C_0$-sequence $(\phi_i)$ such that 
$\int_\Rn |f-\phi_i| \, dx \to 0$ as $i \to \infty$. 
But hence also $\int_\Rn |f- \phi_i| \, d \tHa^n_\infty \to 0$, and thus $f$
is an $\L^1$-limit of $C_0$-functions.

 For the part (2) assume then that $f$
is a $\L^1$-limit of continuous functions. Let $(\phi_i)$ be a sequence of continuous functions converging to $f$ in $\|\cdot\|_1$. 
By Theorem~\ref{lem:point-wise_convergence} 
 there exists a subsequence $(\phi_{i_k})$ that convergences pointwise to  the function $f$. 
Since  each function  $\phi_{i_k}$ is continuous, and hence Lebesgue measurable, the function $f$ is a pointwise limit of 
Lebesgue measurable functions.  The exceptional set has  a zero $\tHa^\delta_\infty$-capacity, and thus it also have  a zero 
Lebesgue outer measure, and hence also zero Lebesgue measure.  These  facts  yield that $f$ is Lebesgue measurable.
\end{proof}

Let $\delta \in(0, n]$ and $f: \Rn \to [- \infty, \infty]$. 
We say that a function $f$ is $\tHa^\delta_\infty$-quasicontinuous if for every $\epsilon>0$ there exists an open set $O$ such that $\tHa^\delta_\infty(O) < \epsilon$ and and $f|_{\Rn\setminus O}$
is continuous.
Recall that $\tHa^\delta_\infty$-quasicontinuous functions are
Lebesgue measurable.

The authors express their gratitude to Daniel Spector for pointing out the following theorem.

\begin{theorem}\label{thm:quasicontinuity}
Let $\delta \in(0, n]$ and $f: \Rn \to [- \infty, \infty]$. Then $f$
is an $\L^1$-limit of  continuous functions if and only if 
$f$ is $\tHa^\delta_\infty$-quasicontinuous.
\end{theorem}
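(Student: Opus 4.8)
The plan is to prove the two implications separately, in both cases adapting classical Lusin/Egorov-type arguments to the Choquet setting. Throughout I treat $f \in \L^1(\Rn,\tHa^\delta_\infty)$, since membership in $\L^1$ is built into the notion of an $\L^1$-limit in Definition~\ref{nlimitdef}. The two workhorses are the sublinearity of the integral (Theorem~\ref{thm:sublin}) and the Chebyshev-type inequality $\tHa^\delta_\infty(\{|h|>\lambda\}) \le \lambda^{-1}\int_\Rn |h|\,d\tHa^\delta_\infty$, which follows immediately from (I1), (I3), and (I5).

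First I would show that an $\L^1$-limit is $\tHa^\delta_\infty$-quasicontinuous. Starting from continuous $\phi_i$ with $\|f-\phi_i\|_1 \to 0$, I pass to a subsequence $(\phi_{i_j})$ that both converges to $f$ pointwise $\tHa^\delta_\infty$-almost everywhere (Theorem~\ref{lem:point-wise_convergence}) and satisfies $\|f-\phi_{i_j}\|_1 \le 4^{-j}$. The sets $A_j := \{x : |\phi_{i_{j+1}}(x)-\phi_{i_j}(x)| > 2^{-j}\}$ are open, as preimages of an open set under a continuous function, and Chebyshev together with the triangle inequality gives $\tHa^\delta_\infty(A_j) \le 2\cdot 2^{-j}$. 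For each $k$ the open set $O_k := \bigcup_{j\ge k} A_j$ has $\tHa^\delta_\infty(O_k) \le 4\cdot 2^{-k}$ by subadditivity (H3), and on $\Rn\setminus O_k$ the telescoping series $\sum_j (\phi_{i_{j+1}}-\phi_{i_j})$ converges uniformly, so $\phi_{i_j}$ converges uniformly there to a continuous function $g$. Since $\phi_{i_j}\to f$ almost everywhere, we have $g=f$ on $\Rn\setminus O_k$ outside a set $N$ of zero $\tHa^\delta_\infty$-content.

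The step I expect to require the most care, and where the specific structure of $\tHa^\delta_\infty$ is essential, is that $g=f$ holds only almost everywhere, whereas quasicontinuity asks for $f$ itself to be continuous off a small open set. This is handled by a feature special to dyadic Hausdorff content: any set of content zero is contained in an open set of arbitrarily small content, which is immediate from the definition of $\tHa^\delta_\infty$ since its admissible covers are already interiors of unions of dyadic cubes. Thus, given $\epsilon>0$, I fix $k$ with $\tHa^\delta_\infty(O_k) < \epsilon/2$, enclose the content-zero set $N$ in an open set $O'$ with $\tHa^\delta_\infty(O')<\epsilon/2$, and set $O := O_k\cup O'$. Then $\tHa^\delta_\infty(O)<\epsilon$ and $f=g$ is continuous on $\Rn\setminus O$; as $\epsilon$ is arbitrary, $f$ is quasicontinuous.

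For the converse I would argue by Lusin-type approximation. Given $\epsilon>0$, I first truncate: writing $f_M := \max(\min(f,M),-M)$, the distribution-function identity yields $\int_\Rn |f-f_M|\,d\tHa^\delta_\infty = \int_M^\infty \tHa^\delta_\infty(\{|f|>s\})\,ds$, which tends to $0$ as $M\to\infty$ because $f\in\L^1$; I fix $M$ so that this is $<\epsilon/2$. The truncation $f_M$ inherits quasicontinuity, being the post-composition of $f$ with the continuous truncation map, so there is an open $O$ with $\tHa^\delta_\infty(O) < \epsilon/(4M)$ and $f_M|_{\Rn\setminus O}$ continuous. Since $\Rn\setminus O$ is closed and $|f_M|\le M$ there, the Tietze extension theorem produces $\phi \in C(\Rn)$ with $|\phi|\le M$ and $\phi = f_M$ on $\Rn\setminus O$. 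Then $|f_M - \phi|$ vanishes outside $O$ and is bounded by $2M$, so $\int_\Rn |f_M-\phi|\,d\tHa^\delta_\infty \le 2M\,\tHa^\delta_\infty(O) < \epsilon/2$, and sublinearity (Theorem~\ref{thm:sublin}) gives $\int_\Rn |f-\phi|\,d\tHa^\delta_\infty < \epsilon$. Taking $\epsilon = 1/i$ produces the required sequence of continuous functions. The only delicate points here are the truncation estimate, which is clean via the distribution function, and that the Tietze extension can be chosen to respect the bound $M$; controlling the integral over the bad set $O$ then reduces to the smallness of $\tHa^\delta_\infty(O)$.
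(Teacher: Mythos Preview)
Your argument is correct. For the forward implication (an $\L^1$-limit is quasicontinuous) you follow essentially the same route as the paper: pass to a rapidly Cauchy subsequence, use Chebyshev on the open sets where consecutive terms differ by more than $2^{-j}$, obtain uniform convergence off their union, and then absorb the $\tHa^\delta_\infty$-null exceptional set into an open set of small content using the fact that admissible covers in the definition of $\tHa^\delta_\infty$ are already open. The paper's proof is the same in structure and detail.

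The reverse implication is where you diverge. The paper does not argue this direction at all; it simply invokes \cite[Proposition~3.2]{PonceSpector2023}. You instead give a self-contained Lusin--Tietze argument: truncate to $f_M$ and control the tail via the distribution function, use quasicontinuity of $f_M$ to find a small open $O$ off which $f_M$ is continuous and bounded, extend by Tietze with the same bound, and estimate $\int_\Rn|f_M-\phi|\,d\tHa^\delta_\infty \le 2M\,\tHa^\delta_\infty(O)$. This is clean and elementary; the only external input is Tietze's theorem, and the capacity-theoretic steps (truncation via the layer-cake formula, sublinearity, the bound on $O$) are all internal to the paper's framework. What the paper's approach buys is brevity; what yours buys is independence from the cited reference and an explicit construction of the approximating continuous functions, which makes the equivalence transparent within the paper's own toolkit.
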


\begin{proof}
By \cite[Proposition 3.2]{PonceSpector2023} every $\Ha^\delta_\infty$-quasicontinuous function is an $\L^1$-limit of continuous functions. 

Assume then that $f$  is an $\L^1$-limit of continuous functions. Let $(\phi_i)$ be a sequence of continuous 
functions converging to $f$ in $\L^1(\Rn, \tHa^\delta_\infty)$. By Theorem~\ref{lem:point-wise_convergence} we may 
assume that $(\phi_i)$ converges to the function $f$ pointwise $\tHa^\delta_\infty$-almost everywhere. Let $N$ be the corresponding
exceptional set.  By the definition of $\tHa^\delta_\infty$, for every $\ve >0$ there exists 
an open set $E_\ve$ such that 
$N \subset E_\epsilon$, 
and $\tHa^\delta_\infty(E_\ve) < \ve$. For example, 
a set of type $\inte\big(\bigcup_{i=1}^\infty Q_i \big)$ works for suitable collection of cubes.

Since  the sequence $(\phi_i)$ is  a Cauchy sequence, we may assume, by taking a suitable subsequence, that
$\|\phi_i- \phi_{i+1}\|_1 < 4^{-i}$ for every $i$. 
We write that
\[
U_i := \{x \in \Rn : |\phi_i(x) - \phi_{i+1}(x)| > 2^{-i}\}
\]
and $V_j := \bigcup_{i=j}^\infty U_i$. Since each  function $\phi_i$ is continuous, the sets $U_i$ and $V_j$ are open.  
Since $2^i|\phi_i(x) - \phi_{i+1}(x)| >1$ in $U_i$ we find that
\[
\tHa^\delta_\infty(U_i) \le \int_\Rn 2^i|\phi_i(x) - \phi_{i+1}(x)| \, d \tHa^\delta_\infty(U_i) 
< 2^i 4^{-i} = 2^{-i},
\]
and thus by (H3)
\[
\tHa^\delta_\infty(V_j) \le \sum_{i=j}^\infty\tHa^\delta_\infty(U_i) < 2^{-j+1}.
\]
For all $x \in \Rn \setminus V_j$, and all $k>l> j$ we have
\[
|\phi_l(x) - \phi_k(x) | \le \sum_{i=l}^{k-1} |\phi_i(x) - \phi_{i+1}(x) | \le \sum_{i=l}^{k-1}2^{-i} \le 2^{1-l}.
\]
Thus the convergence is uniform in the set $\Rn \setminus V_j$. 

Let $\ve >0$. Then there exists   a set $E_\ve$ such that $N \subset E_\epsilon$, and $\tHa^\delta_\infty(E_\ve) < \ve/2$. 
We may choose  an index $j_0$ to be so large that $\tHa^\delta_\infty(V_{j_0}) < \ve/2$. 
Then the sequence $(\phi_i)$ converges uniformly and pointwise to the function $f$  in  the set $\Rn \setminus (E_\ve \cup V_{j_0})$. Thus the function $f$ restricted to the set  $\Rn \setminus (E_\ve \cup V_{j_0})$ is continuous. Moreover, the set $E_\ve \cup V_{j_0}$ is open and by (H3) the inequality
$\tHa^\delta_\infty(E_\ve \cup V_{j_0}) < \ve$ holds.
\end{proof}

R. Basak,   Y.-W. B. Chen,  P. Roychowdhury, and D. Spector study maximal functions and Lebesgue points  in their recent paper
\cite{BCRS_pp}. Let 
$\delta \in(0, n]$.
 In \cite[Theorem B]{BCRS_pp}
they show that 
\begin{equation*}
\lim_{r \to 0^+} \frac{1}{\tHa_\infty^\delta(B(x,r))} \int_{B(x, r)} |f(y) - f(x)| \, d \tHa_\infty^\delta(y) =0
\end{equation*} 
holds $\tHa^\delta_\infty$-almost everywhere 
whenever  a function  $f$ is $\tHa^\delta_\infty$-quasicontinuous. 
Instead of quasicontinuity we assume that the function is an $\L^1$-limit of 
continuous functions. By Theorem~\ref{thm:quasicontinuity} the first part of the following theorem is the same as  \cite[Theorem B]{BCRS_pp}.  

\begin{theorem}\label{thm:main}
Let $\delta \in (0, n]$.
Suppose  that  a function  $f \in \L^1 (\Rn, \tHa^{\delta}_\infty)$  is an $\L^1$-limit of 
continuous functions. Then,
\begin{equation}\label{thm:main-01}
\limsup_{r \to 0^+} \frac{1}{\tHa_\infty^{\delta}(B(x,r))} \int_{B(x, r)} |f(y) - f(x)| \, d \tHa_\infty^{\delta}(y) =0
\end{equation} 
for $\tHa^\delta_\infty$-almost every $x \in \Rn$.

If  the above function $f$ is also  non-negative, then 
\begin{equation}\label{thm:main-1}
f(x) = \limsup_{r \to 0^+} \frac{1}{\tHa_\infty^\delta(B(x,r))}\int_{B(x,r)} f(y) \, d \tHa^\delta_\infty(y)
\end{equation}
for $\tHa^\delta_\infty$-almost every $x \in \Rn$.
\end{theorem}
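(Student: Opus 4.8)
The plan is to recognize \eqref{thm:main-01} as the assertion that the maximal-type function $f^\delta_*$ from \eqref{tahti} vanishes $\tHa^\delta_\infty$-almost everywhere, and to establish this by the classical density scheme adapted to the Choquet setting. First I would fix $\lambda>0$ and use the hypothesis: since $f$ is an $\L^1$-limit of continuous functions, for every $\ve>0$ there is a continuous function $\phi \in \L^1(\Rn, \tHa^\delta_\infty)$ with $\|f-\phi\|_1 < \ve$. The decisive cancellation comes from Lemma~\ref{lem:step3}: part~(1) gives $\phi^\delta_*=0$, and combined with the subadditivity in Lemma~\ref{lem:step1}(1) this yields $f^\delta_*(x) = (f-\phi)^\delta_*(x)$ for every $x$ (this is precisely Lemma~\ref{lem:step3}(2)), so that $\{x : f^\delta_*(x) > \lambda\} = \{x : (f-\phi)^\delta_*(x) > \lambda\}$. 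Applying the weak-type estimate of Lemma~\ref{lem:step2} to $f-\phi \in \L^1(\Rn, \tHa^\delta_\infty)$ then bounds $\tHa^\delta_\infty(\{x : f^\delta_*(x)>\lambda\})$ by $\tfrac{c}{\lambda}\|f-\phi\|_1 < \tfrac{c\ve}{\lambda}$. Letting $\ve\to 0^+$ forces $\tHa^\delta_\infty(\{f^\delta_*>\lambda\})=0$ for every $\lambda>0$; writing $\{f^\delta_*>0\}=\bigcup_{k=1}^\infty \{f^\delta_*>1/k\}$ and invoking subadditivity~(H3) gives $f^\delta_*=0$ $\tHa^\delta_\infty$-almost everywhere, which is \eqref{thm:main-01}.

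For the second assertion I would assume $f\ge 0$ and work at a point $x$ with $f(x)<\infty$, which is the case for $\tHa^\delta_\infty$-almost every $x$ because $f\in\L^1(\Rn,\tHa^\delta_\infty)$. Here the essential difficulty, and the point where one cannot simply transcribe the Lebesgue-measurable argument, is that the Choquet integral is only sublinear rather than additive, so $f_{B(x,r)}^\delta - f(x)$ is \emph{not} the average of $f(y)-f(x)$. Instead I would derive a two-sided estimate from Theorem~\ref{thm:sublin}: starting from the pointwise inequalities $f(y)\le f(x)+|f(y)-f(x)|$ and $f(x)\le f(y)+|f(y)-f(x)|$, integrating over $B(x,r)$, and using $\int_{B(x,r)} f(x)\,d\tHa^\delta_\infty = f(x)\,\tHa^\delta_\infty(B(x,r))$ (by (I1) and (I3)), I obtain both $f_{B(x,r)}^\delta - f(x) \le |f-f(x)|_{B(x,r)}^\delta$ and $f(x) - f_{B(x,r)}^\delta \le |f-f(x)|_{B(x,r)}^\delta$, hence $|f_{B(x,r)}^\delta - f(x)| \le |f-f(x)|_{B(x,r)}^\delta$.

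Finally I would take $\limsup_{r\to 0^+}$ of this last inequality. Its right-hand side is exactly $f^\delta_*(x)$, which vanishes $\tHa^\delta_\infty$-almost everywhere by the first part \eqref{thm:main-01}; therefore $\limsup_{r\to 0^+}|f_{B(x,r)}^\delta - f(x)| = 0$ for $\tHa^\delta_\infty$-almost every $x$, so $\lim_{r\to 0^+} f_{B(x,r)}^\delta$ exists and equals $f(x)$, and in particular $\limsup_{r\to 0^+} f_{B(x,r)}^\delta = f(x)$, which is \eqref{thm:main-1}. I expect the sublinear two-sided bound of the second paragraph to be the main obstacle, as it is where the failure of additivity of the Choquet integral must be absorbed; the remainder is the standard Hardy--Littlewood density argument already prepared by Lemmata~\ref{lem:step1}--\ref{lem:step2}.
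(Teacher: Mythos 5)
Your proposal is correct and takes essentially the same route as the paper's proof: the first part is the identical density argument combining Lemma~\ref{lem:step3}(2), the weak-type bound of Lemma~\ref{lem:step2}, and subadditivity (H3), while the second part uses the same sublinearity-based two-sided comparison of $f_{B(x,r)}^\delta$ with $f(x)$ through $f^\delta_*$, invoking part one. The only (harmless) differences are that your combined bound $|f_{B(x,r)}^\delta - f(x)|\le |f-f(x)|_{B(x,r)}^\delta$ yields the existence of the full limit, slightly more than the stated $\limsup$ identity, and that you make explicit the finiteness of $f(x)$ at $\tHa^\delta_\infty$-almost every point, which the paper leaves implicit.
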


Note that by Theorem~\ref{thm:measurable} the function $f$ in Theorem~\ref{thm:main} is Lebesgue measurable.
Proposition~\ref{prop:limit_is_measurable} shows that the second claim in Theorem~\ref{thm:main} cannot hold for Lebesgue non-measurable functions.

\begin{remark}
If   equation \eqref{thm:main-1} is valid for  some  $x \in \Rn$, then for this $x$ 
\begin{equation*}
|f(x)| \le \M^\delta f(x)\,.
\end{equation*}
\end{remark}

Although our proof for the first part of Theorem~\ref{thm:main} is standard and 
similar to the proof of \cite[Theorem B]{BCRS_pp}, we include the proof for the  convenience of readers.

\begin{proof}[Proof of Theorem~\ref{thm:main}]
Let $(\phi_i)$ be a sequence of continuous functions  converging to $f$ in  the norm $\|\cdot\|_1$.
Then, by Lemma~\ref{lem:step3}(2) and  Lemma \ref{lem:step2}  
\[
\begin{split}
\tHa^\delta_\infty \Big(\{x \in \Rn : f^\delta_*(x)  > \lambda \} \Big) 
&=\tHa^\delta_\infty \Big(\{x \in \Rn : (f-\phi_i)^\delta_*(x)  > \lambda \} \Big)\\
&\le \frac{c}{\lambda}  \int_\Rn |f-\phi_i| \, d \tHa^\delta_\infty\,,
\end{split}
\]
 where $c$ is a constant which depends only on $n$ and $\delta$.
Letting $i \to \infty$  here
implies that $\tHa^\delta_\infty \Big(\{x \in \Rn : f^\delta_*(x)  > \lambda \} \Big) =0$
for every $\lambda >0$. Thus property (H3) yields that
\[
\begin{split}
\tHa^\delta_\infty \Big(\{x \in \Rn : f^\delta_*(x)  >0 \} \Big)
&= \tHa^\delta_\infty \Big(\bigcup_{i=1}^\infty \{x \in \Rn : f^\delta_*(x)  >1/i \} \Big)\\
&\le \sum_{i=1}^\infty \tHa^\delta_\infty \Big(\{x \in \Rn : f^\delta_*(x)  > 1/i \} \Big) =0. 
\end{split}
\]

 Let us then prove the second part of the theorem.
The assumption $f\geq 0$, property (I5), and Theorem~\ref{thm:sublin}  imply that
\[
\begin{split}
\limsup_{r \to 0} f_{B(x,r)}^\delta&= \limsup_{r \to 0} \frac{1}{\tHa_\infty^n (B(x,r))} \int_{B(x, r)} |f(y)| \, d \tHa_\infty^\delta(y)\\ 
&= \limsup_{r \to 0} \frac{1}{\tHa_\infty^\delta (B(x,r))} \int_{B(x, r)} |f(y) - f(x) + f(x)| \, d \tHa_\infty^\delta(y)\\
&\le \limsup_{r \to 0} \frac{1}{\tHa_\infty^\delta (B(x,r))} \int_{B(x, r)} |f(y) - f(x)|  \, d \tHa_\infty^\delta(y)
+  |f(x)|\\ 
&= f^\delta_*(x) + f(x).
\end{split}
\]
By the part (1)
we have $f^\delta_*(x) =0$ for $\tHa^\delta_\infty$-almost every $x \in \Rn$. Hence,
$\limsup_{r \to 0} f_{B(x,r)}^\delta \le f(x)$ for $\tHa^\delta_\infty$-almost every $x \in \Rn$. On the other hand,
\[
\begin{split}
f(x) &= \limsup_{r \to 0} \frac{1}{\tHa_\infty^\delta (B(x,r))} \int_{B(x, r)} |f(x)| \, d \tHa_\infty^\delta(y)\\
&=  \limsup_{r \to 0} \frac{1}{\tHa_\infty^\delta (B(x,r))} \int_{B(x, r)} |f(x)-f(y) + f(y)| \, d \tHa_\infty^\delta(y)\\
&\le f^\delta_*(x) + \limsup_{r \to 0} \frac{1}{\tHa_\infty^\delta (B(x,r))} \int_{B(x, r)} f(y)  \, d \tHa_\infty^\delta(y).
\end{split}
\]
Since   
$f^\delta_*(x) =0$ for $\tHa^\delta_\infty$-almost every $x \in \Rn$ by (1), we have the inequality  $f(x) \le \limsup_{r \to 0} f_{B(x,r)}^\delta$ for $\tHa^\delta_\infty$-almost every $x \in \Rn$ and the claim (2) follows also.
\end{proof}

\begin{remark}
Both assumptions,  $f \in \L^1 (\Rn, \tHa^{\delta}_\infty)$  and $f$ is an $\L^1$-limit of 
continuous functions
in Theorem~\ref{thm:main} can be relaxed since Lebesgue points have a local nature. Instead of  having
$f \in \L^1 (\Rn, \tHa^{\delta}_\infty)$ we may assume that $f \in \L^1 (B, \tHa^{\delta}_\infty)$ for every ball 
$B\subset \Rn$. And instead of  supposing that   $f$ is an $\L^1$-limit of continuous functions we may assume that in every 
ball the function $f$ is a limit of  some continuous functions. 
 Namely, the space $\Rn$ 
can be covered 
by countable many balls $(B_i)$  and in  each ball we  can
take a cut of function $\phi_i \in C^\infty_0(\Rn)$ such that $0 \le \phi_i \le 1$, $\phi_i(x) =1$ in  $B_i$, 
$\spt\phi_i \subset 2B_i$.
Then,  this function $\phi_i f$ is an $\L^1$-limit of continuous functions. Now, Theorem~\ref{thm:main} can be applied to
 the  function $f \phi_i$. 
Hence, we obtain the Lebesgue point property \eqref{thm:main-01} in   each ball $B_i$. Finally,
 subadditivity of  the $\delta$-dimensional Hausdorff content  $\tHa_\infty^\delta$  gives that  property \eqref{thm:main-01} is valid for 
$\tHa^\delta_\infty$-almost every $x \in \Rn$.
\end{remark}

 If we consider the space $\L^1 (\Rn, \tHa^{n}_\infty)$ in 
Theorems~\ref{thm:measurable} and \ref{thm:main},
that is, if we choose 
$\delta =n$ there in the space $\L^1 (\Rn, \tHa^{\delta}_\infty)$, then
Theorems~\ref{thm:measurable} and \ref{thm:main} imply Corollary \ref{cor:main}.

\begin{corollary}\label{cor:main}
Suppose that  a function  $f \in \L^1 (\Rn, \tHa^{n}_\infty)$  is Lebesgue measurable. Then,
\[
\limsup_{r \to 0^+} \frac{1}{\tHa_\infty^n(B(x,r))} \int_{B(x, r)} |f(y) - f(x)| \, d \tHa_\infty^n(y) =0
\] for $\tHa^n_\infty$-almost every $x \in \Rn$.

If in additionally $f$ is non-negative, then it also holds that
\begin{equation*}\label{cor:main-1}
f(x) = \limsup_{r \to 0^+} \frac{1}{\tHa_\infty^n(B(x,r))}\int_{B(x,r)} f(y) \, d \tHa^n_\infty(y)
\end{equation*}

for $\tHa^n_\infty$-almost every $x \in \Rn$.
\end{corollary}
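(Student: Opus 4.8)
The plan is to recognize that Corollary~\ref{cor:main} is exactly the specialization of Theorem~\ref{thm:measurable} and Theorem~\ref{thm:main} to the case $\delta = n$, so the whole argument amounts to chaining these two results together. First I would apply Theorem~\ref{thm:measurable}(1): because $\delta = n$ and $f \in \L^1(\Rn, \tHa^n_\infty)$ is Lebesgue measurable by hypothesis, that theorem gives at once that $f$ is an $\L^1$-limit of continuous functions. This is the one place where the hypothesis $\delta = n$ is genuinely needed, and it is precisely the implication that fails for $\delta < n$, as the examples of Section~\ref{sec:sharpness} show.

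Once $f$ is known to be an $\L^1$-limit of continuous functions, Theorem~\ref{thm:main} (again with $\delta = n$) applies directly. Its first conclusion \eqref{thm:main-01} yields
\[
\limsup_{r \to 0^+} \frac{1}{\tHa_\infty^n(B(x,r))} \int_{B(x, r)} |f(y) - f(x)| \, d \tHa_\infty^n(y) = 0
\]
for $\tHa^n_\infty$-almost every $x \in \Rn$, which is the first assertion of the corollary. If in addition $f \ge 0$, then $f$ satisfies all the hypotheses of the second part of Theorem~\ref{thm:main}, so \eqref{thm:main-1} gives
\[
f(x) = \limsup_{r \to 0^+} \frac{1}{\tHa_\infty^n(B(x,r))}\int_{B(x,r)} f(y) \, d \tHa^n_\infty(y)
\]
for $\tHa^n_\infty$-almost every $x \in \Rn$, completing the argument.

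Because both steps are immediate invocations of results already established, I do not expect any genuine obstacle. The only subtlety worth flagging is that the passage from mere Lebesgue measurability to the $\L^1$-limit-of-continuous-functions property rests, inside Theorem~\ref{thm:measurable}(1), on the comparability of $\tHa^n_\infty$-integration with ordinary Lebesgue integration recorded in Remark~\ref{lemma_a}, together with the density of $C_0(\Rn)$ in $L^1(\Rn)$. This comparability has no counterpart when $\delta < n$, which is exactly why the corollary is stated only for the top-dimensional content $\tHa^n_\infty$.
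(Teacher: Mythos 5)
Your proposal is correct and follows exactly the paper's own route: the paper derives Corollary~\ref{cor:main} by specializing Theorem~\ref{thm:measurable}(1) (to pass from Lebesgue measurability to the $\L^1$-limit property, which requires $\delta = n$) and then invoking Theorem~\ref{thm:main}. Your added remark on why the argument hinges on Remark~\ref{lemma_a} and fails for $\delta < n$ matches the paper's discussion in Section~\ref{sec:sharpness}.
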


Note that Proposition~\ref{prop:limit_is_measurable} shows that the second claim of Theorem~\ref{thm:main} cannot hold for Lebesgue non-measurable functions.

 Finally we prove the corollary in the introduction.

\begin{proof}[Proof of Corollary~\ref{cor:intro}]
Assume first that $f$ is Lebesgue measurable.
Let us write for every $i \in \N$ that
\[
f_i (x) := \chi_{B(0, i)} (x) f(x).
\]
Then, $f_i \in \L^1(\Rn, \tHa^n_\infty)$, and it is a non-negative Lebesgue measurable function.
Corollary~\ref{cor:main} yields that
\[ 
\begin{split}
f_i(x) &= \limsup_{r \to 0^+} \frac{1}{\tHa_\infty^n(B(x,r))}\int_{B(x,r)} f_i(y) \, d \tHa^n_\infty(y)\\
&=\limsup_{r \to 0^+} \frac{1}{\tHa_\infty^n(B(x,r))}\int_{B(x,r)} f(y) \, d \tHa^n_\infty(y)
\end{split}
\] 
for all $x \in B(0, i/2)\setminus E_i$ where $\tHa^n_\infty (E_i) =0$. 
Hence,
\[ 
f(x) = \limsup_{r \to 0^+} \frac{1}{\tHa_\infty^n(B(x,r))}\int_{B(x,r)} f(y) \, d \tHa^n_\infty(y)
\] 
for all $x \in \Rn \setminus E$, where $E = \bigcup_{i=1}^\infty E_i$.   The subadditivity of the Hausdorff content  (H3) implies that
\[
\tHa^n_\infty (E) \le \sum_{i=1}^\infty  \tHa^n_\infty (E_i) =0.
\]

The other direction follows from   Proposition~\ref{prop:limit_is_measurable}.
\end{proof}

\section{Counter examples}\label{sec:sharpness}

In Corollary~\ref{cor:main} we chose the Hausdorff content dimension $\delta$ to be equal to the dimension of the space, that is $\delta =n$.
In the present section we show 
that the following statement  is false:
\begin{quote}
\emph{Let $\delta \in (0, n)$.
If $f \in \L^1 (\Rn, \tHa^{\delta}_\infty)$  is  a Lebesgue measurable function, then}
$f(x) = \limsup_{r \to 0^+} f_{B(x, r)}^{\delta}$ for $\tHa^\delta_\infty$-almost every $x \in \Rn$.
\end{quote}
The problem  is that  here  the exceptional set, which has a  zero $\tHa^\delta_\infty$-content
 with $0<\delta<n$, is too small in capacity sense.

\begin{example}[Characteristic function of a $n$-dimensional ball]\label{exa:pallo}
  Let $\delta \in (0, n-1]$. We define $B:= \{x\in \Rn: x_1^2 + \ldots + x_n^2  < 1\}$, and then  take a characteristic function 
$\chi_B:\Rn \to [0, \infty]$ so that $\chi_B(x) = 1$ if $x \in B$ and $\chi_B(x) =0$  otherwise. 
Then $\chi_B$ is  a Lebesgue measurable function and $\chi_B \in \L^1  (\Rn, \tHa^{\delta}_\infty)$. 
 For every $x \in \partial B$  and $0<r<\frac12$ there exists $y \in B$ such that $B(y, \frac12 r) \subset B$.
This means that
\[
(\chi_B)_{B(x, r)}^{\delta} = \frac{1}{\tHa^{\delta}_\infty (B(x, r))} \int_{B(x, r)} \chi_B \, d \tHa^{\delta}_\infty
\ge \frac{\tHa^{\delta}_\infty (B(y, \frac12 r))}{\tHa^{\delta}_\infty (B(x, r))}.
\]
By \cite[(2.3) and Proposition 2.5]{HH-S_pp} we know that $\tHa^{\delta}_\infty (B(x, r)) \approx r^{\delta}$, 
where the implicit constant depends only on the dimension $n$.
Thus, 
\[
(\chi_B)_{B(x, r)}^{\delta} 
\ge c_1 \frac{(\frac12r)^{\delta}}{r^{\delta}} = c_1 2^{-\delta} >0\,.
\]
In conclusion, we have   that the function $\chi_B$ is Lebesgue measurable, $\chi_B \in \L^1 (\Rn, \tHa^{\delta}_\infty)$, 
 and
\[
\chi_B(x)=0 < c_1 2^{-\delta}  \le \limsup_{r \to 0^+} (\chi_B)_{B(x, r)}^{\delta}.
\]
  And  this  holds for all  points $x \in \partial B$.      
 Since $0<\delta \le n-1$, the $\delta$-dimensional Hausdorff content 
 $\tHa^\delta_\infty (\partial B) $  is positive.
 Thus, the  claim  in the beginning of this chapter  is not true.
 
 Let us look \eqref{thm:main-1} for $f=\chi_B$. The present example shows  then that equation
 \eqref{thm:main-1} does not hold,
 if $f \in \L^1 (\Rn, \tHa^{\delta}_\infty)$ and if $0 < \delta \le n-1$. Hence, by Theorem~\ref{thm:main}
 the function $\chi_B \in \L^1 (\Rn, \tHa^{\delta}_\infty)$ is not 
an $\L^1$-limit of continuous functions  if $0 < \delta \le n-1$.
\end{example}

\begin{example}[Characteristic function of a  modification of the von Koch's snowflake in plane]\label{exa:Koch}
Let us fix $n=2$ and  let $\delta\in (1, 2)$.  The von Koch curve construction can be modify 
so that the resulting  curve has  a Hausdorff dimension of $\delta$ and  a positive $\delta$-dimension Hausdorff measure, see  \cite[Section~3]{HHL06}. 
This happens by dividing a line segment $K$ into four equally long segments, and  replacing each subsegment with 
a segment of length $s|K|$, 
 for a fixed  $s \in(\frac14, \frac12)$, in such a way that the segments $K_0$ and $K_3$ are subsets 
of $K$ at opposite ends of $K$, and $K_1$ and $K_2$ are the sides of an isosceles triangle whose base is $K \setminus (K_0 \bigcup K_3)$. This gives a curve 
 with the Hausdorff dimension  $\frac{\log(4)}{\log(1/s)}$. 


By joining three these curves together we obtain the von Koch snowflake $F$, which bounds an open set  $\Omega$.
This means that
$F = \partial \Omega$.
We show that for   all points  $x \in F$ we have
\[
\chi_\Omega(x) \neq \limsup_{r \to 0^+} (\chi_\Omega)_{B(x, r)}^{\delta}.
\]
Clearly $\chi_\Omega(x)=0$ for all $x \in F$. By the construction of  the von Koch curve there exists $c \in(0, 1)$, depending on $\delta$, such that every $x \in F$ and every $0<r<\diam(\Omega)/2$
there exists $y \in \Omega$ such that 
$B(y, cr) \subset \Omega \cap B(x, r)$. 
  Calculations as in Example~\ref{exa:pallo} imply that
\[
\chi_\Omega(x)=0 < c_1 c^{\delta}  \le \limsup_{r \to 0^+} (\chi_\Omega)_{B(x, r)}^{\delta}
\]
for  all  points $x \in F$, and $\tHa^\delta_\infty (F) >0$.
Moreover, 
 the characteristic function
$\chi_\Omega$ is  a Lebesgue measurable  function which belongs to  $\L^1 (\Rn, \tHa^{\delta}_\infty)$. 
 Thus \eqref{thm:main-1} does not hold, and hence 
by Theorem~\ref{thm:main} the function $\chi_\Omega \in \L^1 (\R^2, \tHa^{\delta}_\infty)$ 
is not an $\L^1$-limit of continuous functions.
\end{example}

\begin{remark}\label{rmk:dense}
Let $0<\delta <n$  be fixed.
Examples \ref{exa:pallo} and \ref{exa:Koch} 
show that in the function space
 $\L^1(\Rn, \tHa^\delta_\infty)$  there exist Lebesgue measurable functions  
 which  are not $\L^1$-limits of continuous functions. This implies two
 remarkable conclusions: 
 \begin{enumerate}
\item if $\delta <n$, then continuous functions are not dense in $\L^1(\Rn, \tHa^\delta_\infty)$;
\item the assumption $\delta =n$ in Theorem~\ref{thm:measurable} (1) is necessary.
\end{enumerate}
\end{remark}

\bibliographystyle{amsalpha}

\end{document}